\setlist[enumerate]{format=\normalfont}
\newtheorem{theorem}{Theorem}[section]
\newtheorem{prop}[theorem]{Proposition}
\newtheorem{lemma}[theorem]{Lemma}
\newtheorem{definition}[theorem]{Definition}
\newtheorem{cor}[theorem]{Corollary}
\newtheorem{conj}[theorem]{Conjecture}
\theoremstyle{definition}
\newtheorem{remark}[theorem]{Remark}
\newtheorem{notation}[theorem]{Notation}
\numberwithin{equation}{section}
\newcommand{\bmc}{\begin{multicols}}
	\newcommand{\emc}{\end{multicols}}
\tikzset{>=stealth',
	cvertex/.style={circle,draw=black,inner sep=1pt,outer sep=3pt},
	vertex/.style={circle,fill=black,inner sep=1pt,outer sep=3pt},
	star/.style={circle,fill=yellow,inner sep=0.75pt,outer sep=0.75pt},
	tvertex/.style={inner sep=1pt,font=\scriptsize},
	gap/.style={inner sep=0.5pt,fill=white}}
\tikzstyle{mybox} = [draw=black, fill=blue!10, very thick,
\tikzstyle{boxtitle} =[fill=blue!50, text=white,rectangle,rounded corners]
\numberwithin{equation}{section}
\newcommand{\looptop}[2]{\xy \SelectTips{cm}{10}
	\POS(0,0) \endxy}
\setlist[enumerate]{format=\normalfont}
\newcommand{\scrM}{\EuScript{M}}
\newcommand{\scrR}{\EuScript{R}}
\newcommand{\scrS}{\EuScript{S}}
\DeclareMathOperator{\Rep}{\mathrm{Rep}}
\begin{document}
	
	\title{Deformations and Simultaneous Resolution of Determinantal Surfaces}
	\author{Brian Makonzi}
	\address{Brian Makonzi, Department of Mathematics, Muni University, P.O Box 725 Arua, Uganda \& Department of Mathematics, Makerere University, P.O Box 7062 Kampala, Uganda \& The Mathematics and Statistics Building, University of Glasgow, University Place, Glasgow, G12 8QQ, UK} 
	\email{mckonzi@gmail.com}
	\begin{abstract}
		This paper uses reconstruction algebras	to construct simultaneous resolution of determinantal surfaces. The main new difference to the classical case is that, in addition to the quiver of the reconstruction algebra, certain noncommutative relations, namely those of the canonical algebra of Ringel, are required. All the relations of the reconstruction algebra except the canonical relation are then deformed, and these deformed relations together with variation of the GIT quotient achieve the simultaneous resolution. 	
	\end{abstract}
	
	\maketitle
	\parindent 20pt
	\parskip 0pt
	
	\maketitle

	\section{introduction}	
	Rational surface singularities play a fundamental role in various areas of mathematics, such as algebraic geometry and singularity theory.  In this paper, we specifically focus on constructing simultaneous resolution using noncommutative techniques, in the case of determinantal singularities. These are both non-Gorenstein and non-toric. The noncommutative framework we employ enables us to explore a wider spectrum of rational surface singularities, facilitating a deeper understanding of their properties and paving the way for future investigations.

	\subsection{Motivation and Background}	
	Grothendieck and Brieskorn \cite{Resolution} \cite{defspace} made significant contributions in constructing the deformation space for Kleinian singularities $\mathbb{C}^2/H$ with $H\leq\mathrm{SL}(2,\mathbb{C}).$ They established a connection between these singularities and the Weyl group $W$ of the corresponding simple simply--connected complex Lie group. Brieskorn \cite{defspace} successfully constructed the semiuniversal deformation $D \to \mathfrak{h}_\mathbb{C}/W$ of Kleinian singularities, and after base change via the action of the Weyl group as in the diagram below, the resulting morphism $Art \to \mathfrak{h}_\mathbb{C}$ is a family of singular surfaces, which resolves simultaneously \cite{defspace}.
	
	\[
	\begin{tikzpicture}
	\node (A) at (0,0) {$Art$};
	\node (B) at (2,0) {$D$};
	\node (a) at (0,-1) {$\mathfrak{h}_\mathbb{C}$};
	\node (b) at (2,-1) {$\mathfrak{h}_\mathbb{C}/W$};
	\draw[->] (A)--(B);
	\draw[->] (A)--(a);
	\draw[->] (a)--(b);
	\draw[->] (B)--(b);
	\end{tikzpicture}
	\]

	Kronheimer \cite{kronheimer} and Cassens-Slodowy \cite[\S3]{OnKleinianSing} utilized the McKay quiver as a means to construct the semiuniversal deformation of Kleinian singularities, along with their simultaneous resolutions, belonging to types $A_n,~D_n,~E_6,~E_7$ and $E_8$. Building upon this, Crawley-Boevey and Holland \cite{crawley1998noncommutative} later provided a reinterpretation of this approach in the context of the deformed preprojective algebra.
	
	\medskip  
	However, for non-Gorenstein surface quotient singularities, namely
	those $\mathbb{C}^2/H$ for small finite groups $H\leq\mathrm{GL}(2,\mathbb{C})$ that are not inside $\mathrm{SL}(2,\mathbb{C})$, the situation poses a more formidable challenge. Artin \cite{ArtinComp} revealed that within the deformation space of non-Gorenstein singularities, there exists a distinct component (the Artin component) which is the smooth irreducible base space $\mathrm{H}^1_\mathbb{C}/W$ in the diagram below.
	Over this base, there is a family of singular surfaces $D  \to  \mathrm{H}^1_\mathbb{C}/W$ which after finite base change by some appropriate Weyl group $W$ pulls back to give a family of singular surfaces $ Art \to \mathrm{H}^1_\mathbb{C},$  which again admits simultaneous resolution.
	
	\[
	\begin{tikzpicture}
	\node (A) at (0,0) {$Art$};
	\node (B) at (2,0) {$D$};
	\node (a) at (0,-1) {$\mathrm{H}^1_\mathbb{C}$};
	\node (b) at (2,-1) {$\mathrm{H}^1_\mathbb{C}/W$};
	\draw[->] (A)--(B);
	\draw[->] (A)--(a);
	\draw[->] (a)--(b);
	\draw[->] (B)--(b);
	\end{tikzpicture}
	\]

	Riemenschneider \cite{deformRational} computed the Artin component for cyclic quotient singularities, then later in \cite[\S5]{RieCyclic} he used the McKay quiver and special representations as described by Wunram \cite{Wunram} to give an alternative description. But simultaneous resolution is also not obtained using the McKay quiver perspective. Our previous work \cite{Makonzi} solves this by using reconstruction algebras to construct simultaneous resolution for cyclic groups.\\
	
	Building upon this success, this paper extends these techniques to	determinantal surface singularities, thereby broadening the scope of known examples with simultaneous resolution.

	\subsection{Main Result}\label{main result}
	This paper considers a class of rational surface singularities called  determinantal surface singularities. Consider $R,$ the determinantal singularity given as the quotient of $\mathbb{C}[v, w_1, w_2, w_3]$ by the $2 \times 2$ minors of the matrix
	\[\left(
	\begin{array}{ccccc}
	{w}_2&{w}_3 &{v^{p_2}}\\
	{v^{p_1}}&{{w}_3+{v^{p_3}}}&{w}_1
	\end{array}
	\right).\]
	\vspace{0.1cm}
	The graph of exceptional curves of the minimal resolution $X \rightarrow \text{Spec}~ R$ is the star-shaped graph in (\ref{s Veron dual graph}).\\
	
	The quiver $Q$ of the reconstruction algebra corresponding to Spec$R$ is recalled in \S\ref{Preliminaries}. For dimension vector $\updelta=(1,\hdots,1),$ consider $\scrR\colonequals \mathbb{C}[\Rep(\mathbb{C}Q,\updelta)]/I$ and crucially $I$ is the ideal generated by Ringel's \cite{RingelCanAlgebras} canonical relation (see (\ref{ReconAlgebraQuiver})). This carries a natural action of $G \colonequals \textstyle \prod_{q \in Q_0} \mathbb{C}^{\ast}$ where $Q_0$ denotes the set of vertices of $Q.$ As shown in \S\ref{ReconAlg}, $\scrR^G$ is generated by cycles. These generate a $\mathbb{C}$--algebra $\mathbb{C}[\mathsf{w} , \mathsf{v}],$ and they further satisfy determinantal relations (recalled in \S \ref{QDetform}). 
	
	\medskip
	Simultaneous resolution is then achieved by introducing the deformed reconstruction algebra (see \S\ref{Deformed}), which generalises the work of 
	Crawley-Boevey--Holland \cite{crawley1998noncommutative} on deformed preprojective algebras. In \S\ref{SimRes}, we construct a map $\uppi\colon \text{Spec}~\scrR^G \rightarrow \Delta,$ where $\Delta$ is an affine space defined in Notation \ref{Notationrel}. The fibre above the origin is the determinantal singularity Spec $R$ above (see Remark \ref{fibreabovegamma} later). The following is our main result, where $\upvartheta_0$ is a \emph{particular} choice of stability condition explained in \S\ref{ModuliofReconAlg}.

	\begin{theorem}[{\ref{thm: main}}]\label{thm: main intro}The diagram
		\[
		\begin{tikzpicture}
		\node (A) at (0,0) {$\Rep(\mathbb{C}Q/I,~\updelta)~ /\!\!\!\!/_{\upvartheta_0} \mathrm{GL}$};
		\node (B) at (4,0) {$\text{Spec}~ \mathcal{\scrR}^G$};
		\node (b) at (4,-2) {$\Delta$};
		\draw[->] (A)-- node[above]  {} (B);
		\draw[densely dotted,->] (A)-- node[below]  {$\upphi$} (b);
		\draw[->] (B)-- node[right]  {$\uppi$} (b);
		a\end{tikzpicture}
		\]
		is a simultaneous resolution of singularities in the sense that the morphism $\upphi$ is smooth, and $\uppi$ is flat.
	\end{theorem}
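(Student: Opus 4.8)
The plan is to verify the two assertions separately: flatness of $\uppi$ on the base $\scrR^G \to \Delta$, and smoothness of $\upphi$ on the total space. I would organise the argument around the explicit presentation of $\scrR^G$ as $\mathbb{C}[\mathsf{w},\mathsf{v}]$ modulo determinantal relations, deformed in the way dictated by the deformed reconstruction algebra of \S\ref{Deformed}.

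\textbf{Flatness of $\uppi$.} First I would write down generators and relations for $\scrR^G$ explicitly: by \S\ref{ReconAlg} the invariants are generated by the cycles $\mathsf{w}_1,\mathsf{w}_2,\mathsf{w}_3,\mathsf{v}$ in the quiver $Q$, and by \S\ref{QDetform} these satisfy the $2\times 2$ minors of a $2\times 3$ matrix whose entries are deformed by the parameters defining $\Delta$. The map $\uppi$ is then the obvious one sending the coordinate ring of $\Delta$ into $\scrR^G$ via those parameters. To prove flatness, the cleanest route is to exhibit $\scrR^G$ as a \emph{free} module, or at least a flat module, over $\mathbb{C}[\Delta]$: one shows that the deformed determinantal relations form a regular sequence relative to the base, equivalently that the relative dimension is constant, equivalently (by the standard criterion, e.g. via a relative Gr\"obner basis / standard monomial argument) that the Hilbert function of the fibres $\uppi^{-1}(\updelta)$ is independent of $\updelta\in\Delta$. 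Concretely, determinantal rings of this type are Cohen--Macaulay of the expected codimension, so one invokes the principle that a Cohen--Macaulay fibre of the expected dimension over a regular base forces flatness (``miracle flatness''): since $\scrR^G$ is Cohen--Macaulay, $\Delta$ is smooth, and every fibre has dimension $2$, the map $\uppi$ is flat. The identification of the central fibre with $\mathrm{Spec}\,R$ is Remark \ref{fibreabovegamma}.

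\textbf{Smoothness of $\upphi$.} For the total space, the key input is that $\Rep(\mathbb{C}Q/I,\updelta)/\!\!/_{\upvartheta_0}\mathrm{GL}$ is, for the distinguished stability condition $\upvartheta_0$ of \S\ref{ModuliofReconAlg}, the minimal resolution $X$ of $\mathrm{Spec}\,R$ (this is the non-deformed statement, presumably established earlier or in \cite{Makonzi}), and that turning on the deformation parameters varies this GIT quotient in a family over $\Delta$. I would first check that $\upvartheta_0$-semistability equals $\upvartheta_0$-stability for $\updelta=(1,\dots,1)$ (no proper subrepresentation has the critical slope), so the quotient is a geometric quotient and the moduli space is smooth of the expected dimension fibrewise; then smoothness of the family morphism $\upphi$ follows by checking it fibrewise together with flatness, or directly by a tangent-space/obstruction computation showing the deformed relations cut out a smooth subscheme of $\Rep(\mathbb{C}Q,\updelta)$ equivariantly. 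Because the canonical relation is \emph{not} deformed, the ambient representation variety with the canonical relation imposed is already smooth, and the remaining deformed relations define a smooth complete intersection inside it — this is exactly the phenomenon that the deformed preprojective algebra exploits in \cite{crawley1998noncommutative}, and I would mirror that argument.

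\textbf{Main obstacle.} The delicate point is the interaction between the GIT quotient and the deformation: one must show that $\upvartheta_0$-stability is an \emph{open} condition that holds on the whole family (not just the central fibre), so that $\upphi$ has no jumping of stabilisers and the quotient commutes with base change along $\Delta$. Equivalently, the hard part is ruling out, uniformly over $\Delta$, the existence of $\upvartheta_0$-destabilising subrepresentations of the deformed algebra; this is where the precise choice of $\upvartheta_0$ and the combinatorics of the reconstruction quiver $Q$ enter, and it is the step I expect to require the most care. Once stability is controlled uniformly, smoothness of $\upphi$ reduces to the fibrewise smoothness plus flatness already in hand, and the theorem follows.
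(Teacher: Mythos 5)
Your high-level skeleton agrees with the paper's: both arguments reduce everything to ``miracle flatness'' (\cite[Corollary to 23.1]{Matsumura}) over the regular base $\Delta$, plus the criterion that a flat morphism with regular fibres at closed points is smooth. However, the substantive content of the proof is missing or rests on inputs that are not available. For flatness of $\uppi$ you propose to use the presentation of $\scrR^G$ as a determinantal ring and quote Cohen--Macaulayness of determinantal rings of the expected codimension; but that presentation is precisely the statement the paper leaves as a \emph{conjecture} in \S\ref{repvar} (only surjectivity of $\varphi$ is proved), so it cannot be invoked. The paper instead obtains the constant fibre dimension $\dim\uppi^{-1}(\boldsymbol{\upgamma})=2$ directly from the explicit chart computation for the deformed reconstruction algebra (Proposition \ref{modulicoverDeformed} and Corollary \ref{ModuliDeformedReconSmooth}), with no appeal to the conjectural determinantal form.

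For smoothness of $\upphi$, your assertion that ``the ambient representation variety with the canonical relation imposed is already smooth, and the remaining deformed relations define a smooth complete intersection inside it'' is exactly the claim that needs proof, and it does not follow by mirroring \cite{crawley1998noncommutative}: the deformed reconstruction algebra is not a deformed preprojective algebra (the canonical relation is an extra, non-moment-map relation, and the $2$-cycle relations are not the components of a single moment map), so the hyper-K\"ahler/complete-intersection mechanism is not available off the shelf. The paper does this by brute force: it covers $\scrM_{\upvartheta_0}(\Lambda_{\boldsymbol{\upgamma}},\updelta)$ by explicit charts $U^k_{i,j}$, shows each is a $2$-dimensional affine domain using \cite[Corollary 13.11]{Eisenbud} together with Lemma \ref{f2_not_aunit} (which guarantees the second relation is not a unit, i.e.\ the chart is nonempty of the right dimension), and then proves $1$ lies in the Jacobian ideal via the identity of Lemma \ref{Jacobianpr}. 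Relatedly, the ``main obstacle'' you identify --- uniform $\upvartheta_0$-stability across the family --- is not where the difficulty lies: stability is handled once and for all by the path-reaching criterion of Lemma \ref{Cover}, which applies verbatim in the deformed case; the genuine difficulty is nonemptiness and smoothness of the fibres, which your proposal defers rather than proves.
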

	
	The smoothness of the fibres of $\upphi$ is achieved using moduli spaces of the deformed reconstruction algebra $\Lambda_{\boldsymbol{\upgamma}}.$ These are introduced in \S\ref{Deformed}, and may be of independent interest. Note that this is significantly more difficult than for cyclic quotients, with the main difficulty being showing that $\scrM_{\upvartheta_0} \neq \emptyset$ (see Remark \ref{gammainDelta}). 
	\medskip
	
	This paper is organised as follows. Section \ref{Preliminaries} recalls the reconstruction algebra of star-shaped graphs. Section \ref{SecSimRes} introduces the deformed reconstruction algebra, and uses this in Section \ref{SimResSection} to achieve simultaneous resolution. Section \ref{repvar}
	gives a conjectural presentation of $\mathcal{\scrR}^G,$ although this is not required to prove the existence of simultaneous resolution.\\
	
	\subsection*{Conventions} Throughout we work over the complex numbers $\mathbb{C}.$ If $a$ and $b$ are arrows in a quiver, $ab$ denotes $a$ followed by $b,$ $t(a)$ is the tail of $a$ and $h(a)$ is the head of $a.$
	
	\subsection*{Acknowledgements} The author would like to thank the GRAID program for sponsoring his PhD studies, and the
	ERC Consolidator Grant 101001227 (MMiMMa) for funding his visit to Glasgow. Furthermore,  he would like to extend special thanks to his supervisors Michael Wemyss and David Ssevviiri for their exceptional guidance and encouragement, the Department of Mathematics, Makerere University and the School of Mathematics \& Statistics, University of Glasgow for providing a welcoming and supportive environment. The hospitality and conducive atmosphere have greatly facilitated the author's studies and research.
	
	\section{preliminaries}\label{Preliminaries}
	This section recalls the reconstruction algebra of star-shaped graphs following \cite{IyamaWemyss}. Consider the following star shaped dual graph, where there are $n \geq 3$ arms. 
	\[
	\begin{array}{c}
	\begin{tikzpicture}[xscale=1,yscale=0.8]
	\node (0) at (0,0) [vertex] {};
	\node (A1) at (-3,1) [vertex]{};
	\node (A2) at (-3,2) [vertex] {};
	\node (A3) at (-3,3) [vertex] {};
	\node (A4) at (-3,4) [vertex] {};
	\node (B1) at (-1.5,1) [vertex] {};
	\node (B2) at (-1.5,2) [vertex] {};
	\node (B3) at (-1.5,3) [vertex] {};
	\node (B4) at (-1.5,4) [vertex] {};
	\node (C1) at (0,1) [vertex] {};
	\node (C2) at (0,2) [vertex] {};
	\node (C3) at (0,3) [vertex] {};
	\node (C4) at (0,4) [vertex] {};
	\node (n1) at (2,1) [vertex] {};
	\node (n2) at (2,2) [vertex] {};
	\node (n3) at (2,3) [vertex] {};
	\node (n4) at (2,4) [vertex] {};
	\node at (-3,2.6) {$\vdots$};
	\node at (-1.5,2.6) {$\vdots$};
	\node at (0,2.6) {$\vdots$};
	\node at (2,2.6) {$\vdots$};
	\node at (1,3.5) {$\hdots$};
	\node at (1,1.5) {$\hdots$};
	\node (T) at (0,4.25) {};
	\node at (-2.6,1) {$\scriptstyle -2$};
	\node at (-2.6,2) {$\scriptstyle -2$};
	\node at (-2.6,3) {$\scriptstyle -2$};
	\node at (-2.6,4) {$\scriptstyle -2$};
	\node at (-1.1,1) {$\scriptstyle -2$};
	\node at (-1.1,2) {$\scriptstyle -2$};
	\node at (-1.1,3) {$\scriptstyle -2$};
	\node at (-1.1,4) {$\scriptstyle -2$};
	\node at (0.4,1) {$\scriptstyle -2$};
	\node at (0.4,2) {$\scriptstyle -2$};
	\node at (0.4,3) {$\scriptstyle -2$};
	\node at (0.4,4) {$\scriptstyle -2$};
	\node at (2.45,1) {$\scriptstyle -2$};
	\node at (2.45,2) {$\scriptstyle -2$};
	\node at (2.45,3) {$\scriptstyle -2$};
	\node at (2.45,4) {$\scriptstyle -2$};
	\node at (0.4,-0.1) {$\scriptstyle -n$};
	\draw (A1) -- (0);
	\draw (B1) -- (0);
	\draw (C1) -- (0);
	\draw (n1) -- (0);
	\draw (A2) -- (A1);
	\draw (B2) -- (B1);
	\draw (C2) -- (C1);
	\draw (n2) -- (n1);
	\draw (A4) -- (A3);
	\draw (B4) -- (B3);
	\draw (C4) -- (C3);
	\draw (n4) -- (n3);
	\end{tikzpicture}
	\end{array}
	\]
	
	\noindent
	Below, we restrict to the case $n = 3.$ Note that all techniques below generalise, at the cost of additional significant notation. Thus our dual graph is 
	
	\begin{equation}\label{s Veron dual graph}
	\begin{array}{c}
	\begin{tikzpicture}[xscale=0.9,yscale=0.9]
	\node (0) at (0,0) [vertex] {};
	\node (B1) at (-2,1) [vertex] {};
	\node (B2) at (-2,2) [vertex] {};
	\node (B3) at (-2,3) [vertex] {};
	\node (B4) at (-2,4) [vertex] {};
	\node (C1) at (0,1) [vertex] {};
	\node (C2) at (0,2) [vertex] {};
	\node (C3) at (0,3) [vertex] {};
	\node (C4) at (0,4) [vertex] {};
	\node (n1) at (2,1) [vertex] {};
	\node (n2) at (2,2) [vertex] {};
	\node (n3) at (2,3) [vertex] {};
	\node (n4) at (2,4) [vertex] {};
	\node at (-2,2.6) {$\vdots$};
	\node at (0,2.6) {$\vdots$};
	\node at (2,2.6) {$\vdots$};
	\node (T) at (0,4.25) {};
	\node at (-1.7,1) {$\scriptstyle -2$};
	\node at (-1.7,2) {$\scriptstyle -2$};
	\node at (-1.7,3) {$\scriptstyle -2$};
	\node at (-1.7,4) {$\scriptstyle -2$};
	\node at (0.3,1) {$\scriptstyle -2$};
	\node at (0.3,2) {$\scriptstyle -2$};
	\node at (0.3,3) {$\scriptstyle -2$};
	\node at (0.3,4) {$\scriptstyle -2$};
	\node at (2.3,1) {$\scriptstyle -2$};
	\node at (2.3,2) {$\scriptstyle -2$};
	\node at (2.3,3) {$\scriptstyle -2$};
	\node at (2.3,4) {$\scriptstyle -2$};
	\node at (0.4,-0.1) {$\scriptstyle -3$};
	\draw (B1) -- (0);
	\draw (C1) -- (0);
	\draw (n1) -- (0);
	\draw (B2) -- (B1);
	\draw (C2) -- (C1);
	\draw (n2) -- (n1);
	\draw (B4) -- (B3);
	\draw (C4) -- (C3);
	\draw (n4) -- (n3);
	\draw [decorate,decoration={brace,amplitude=5pt},xshift=-4pt,yshift=0pt]
	(2,1) -- (2,4) node [black,midway,xshift=-0.55cm] 
	{$\scriptstyle p_3-1$};
	\draw [decorate,decoration={brace,amplitude=5pt},xshift=-4pt,yshift=0pt]
	(0,1) -- (0,4) node [black,midway,xshift=-0.55cm] 
	{$\scriptstyle p_2-1$};
	\draw [decorate,decoration={brace,amplitude=5pt},xshift=-4pt,yshift=0pt]
	(-2,1) -- (-2,4) node [black,midway,xshift=-0.55cm] 
	{$\scriptstyle p_1-1$};
	\end{tikzpicture}
	\end{array}
	\end{equation}
	where there are $3$ arms, and the number of vertices on arm $i$ is $p_i-1$. Contracting these curves gives the singularity Spec$R$ in \S \ref{main result}. To (\ref{s Veron dual graph}) we associate the extended quiver, and the extended vertex is called the $0^{th}$ vertex. The resulting quiver is as shown below 
	
	\[
	\begin{array}{cc}
	Q^{\prime} \colonequals  &
	\begin{array}{c}
	\\
	\begin{tikzpicture}[xscale=0.9,yscale=0.9,bend angle=40, looseness=1]
	\node (0) at (0,0) [vertex] {};
	\node (B1) at (-2,1) [vertex] {};
	\node (B2) at (-2,2) [vertex] {};
	\node (B3) at (-2,3) [vertex] {};
	\node (B4) at (-2,4) [vertex] {};
	\node (C1) at (0,1) [vertex] {};
	\node (C2) at (0,2) [vertex] {};
	\node (C3) at (0,3) [vertex] {};
	\node (C4) at (0,4) [vertex] {};
	\node (n1) at (2,1) [vertex] {};
	\node (n2) at (2,2) [vertex] {};
	\node (n3) at (2,3) [vertex] {};
	\node (n4) at (2,4) [vertex] {};
	\node at (-2,2.6) {$\vdots$};
	\node at (0,2.6) {$\vdots$};
	\node at (2,2.6) {$\vdots$};
	\node (T) at (0,5) [cvertex] {};
	\draw [->] (B1) --node[above,pos=0.5]{$\scriptstyle d_{1p_1}$} (0);
	\draw [->] (C1) --node[right,pos=0.2]{$\scriptstyle d_{2p_2}$} (0);
	\draw [->] (n1) --node[below,pos=0.2]{$\scriptstyle d_{3p_3}$} (0);
	\draw [->] (B2) --node[right,pos=0.5]{$\scriptstyle d_{1p_1-1}$}(B1);
	\draw [->] (C2) --node[right,pos=0.5]{$\scriptstyle d_{2p_2-1}$}(C1);
	\draw [->] (n2) --node[right,pos=0.5]{$\scriptstyle d_{3p_3-1}$} (n1);
	\draw [->] (B4) --node[right,pos=0.5]{$\scriptstyle d_{12}$}(B3);
	\draw [->] (C4) --node[right,pos=0.5]{$\scriptstyle d_{22}$}(C3);
	\draw [->] (n4) -- node[right,pos=0.5]{$\scriptstyle d_{32}$} (n3);
	\draw [->] (T) -- node[below,pos=0.4]{$\scriptstyle d_{11}$}(B4);
	\draw [->] (T) -- node[right,pos=0.6]{$\scriptstyle d_{21}$}(C4);
	\draw [->] (T) -- node[gap,pos=0.5]{$\scriptstyle d_{31}$}(n4);
	\end{tikzpicture}
	\end{array}
	\end{array}
	\]
	The notation $d_{12}$ is read `the second downward arrow along arm 1'.\\
	
	From this, Ringel \cite{RingelCanAlgebras} introduces the canonical algebra, as the path algebra of the quiver $Q^{\prime}$ subject to the relation $$I = \left<d_{11}\hdots d_{1p_1} - d_{21}\hdots d_{2p_2} + d_{31}\hdots d_{3p_3}\right>.$$
	\noindent

	The double quiver of $Q^{\prime}$ is written as follows
	
	\begin{equation}\label{ReconAlgebraQuiver}
	\begin{array}{cc}
	Q \colonequals  &
	\begin{array}{c}
	\\
	\begin{tikzpicture}[xscale=1.3,yscale=1.2,bend angle=40, looseness=1]
	\node (0) at (0,0) [vertex] {};
	\node (B1) at (-2,1) [vertex] {};
	\node (B2) at (-2,2) [vertex] {};
	\node (B3) at (-2,3) [vertex] {};
	\node (B4) at (-2,4) [vertex] {};
	\node (C1) at (0,1) [vertex] {};
	\node (C2) at (0,2) [vertex] {};
	\node (C3) at (0,3) [vertex] {};
	\node (C4) at (0,4) [vertex] {};
	\node (n1) at (2,1) [vertex] {};
	\node (n2) at (2,2) [vertex] {};
	\node (n3) at (2,3) [vertex] {};
	\node (n4) at (2,4) [vertex] {};
	\node at (-2,2.6) {$\vdots$};
	\node at (0,2.6) {$\vdots$};
	\node at (2,2.6) {$\vdots$};
	\node (T) at (0,5) [cvertex] {};
	\draw [->] (B1) --node[above,pos=0.5]{$\scriptstyle d_{1p_1}$} (0);
	\draw [->] (C1) --node[right,pos=0.2]{$\scriptstyle d_{2p_2}$} (0);
	\draw [->] (n1) --node[gap,pos=0.6]{$\scriptstyle d_{3p_3}$} (0);
	\draw [->] (B2) --node[right,pos=0.5]{$\scriptstyle d_{1p_1-1}$}(B1);
	\draw [->] (C2) --node[right,pos=0.5]{$\scriptstyle d_{2p_2-1}$}(C1);
	\draw [->] (n2) --node[right,pos=0.5]{$\scriptstyle d_{3p_3-1}$} (n1);
	\draw [->] (B4) --node[right,pos=0.5]{$\scriptstyle d_{12}$}(B3);
	\draw [->] (C4) --node[right,pos=0.5]{$\scriptstyle d_{22}$}(C3);
	\draw [->] (n4) -- node[right,pos=0.5]{$\scriptstyle d_{32}$} (n3);
	\draw [->] (T) -- node[below,pos=0.4]{$\scriptstyle d_{11}$}(B4);
	\draw [->] (T) -- node[right,pos=0.6]{$\scriptstyle d_{21}$}(C4);
	\draw [->] (T) -- node[gap,pos=0.5]{$\scriptstyle d_{31}$}(n4);
	\draw [bend right, bend angle=10, looseness=0.5, <-, red] (B1)+(-50:4.5pt) to node[left,pos=0.2]{$\scriptstyle u_{1p_1}$}($(0) + (160:4.5pt)$);
	\draw [bend right, <-, red](C1) to node[gap,pos=0.5]{$\hspace{-0.2em}\scriptstyle u_{2p_2}$} (0);
	\draw [bend right, bend angle=10, looseness=0.5, <-, red](n1) to node[gap,pos=0.5]{$\scriptstyle u_{3p_3}$} (0);
	\draw [bend right, <-, red] (B2) to node[gap,pos=0.3]{$\scriptstyle u_{1p_1-1}$}(B1);
	\draw [bend right, <-, red] (C2) to node[gap,pos=0.3]{$\scriptstyle u_{2p_2-1}$}(C1);
	\draw [bend right, <-, red] (n2) to node[left,pos=0.5]{$\scriptstyle u_{3p_3-1}$}(n1);
	\draw [bend right, <-, red] (B4) to node[gap,pos=0.5]{$\scriptstyle u_{12}$}(B3);
	\draw [bend right, <-, red] (C4) to node[gap,pos=0.5]{$\scriptstyle u_{22}$}(C3);
	\draw [bend right, <-, red] (n4) to node[left,pos=0.5]{$\scriptstyle u_{32}$}(n3);
	\draw [bend right, bend angle=10, looseness=0.5, <-, red] (T)+(-155:4.5pt) to node[gap,pos=0.6]{$\scriptstyle u_{11}$}($(B4) + (60:4.5pt)$);
	\draw [bend right, <-, red] (T) to node[gap,pos=0.5]{$\scriptstyle u_{21}$}(C4);
	\draw [bend right, bend angle=10, looseness=0.5,  <-, red] (T) to node[gap,pos=0.5]{$\scriptstyle u_{31}$}(n4);
	\end{tikzpicture}
	\end{array}
	\end{array}
	\end{equation}
	Set $D_i=d_{i1}\hdots d_{ip_i},$ $U_i=u_{ip_i}\hdots u_{i1}$, which are the paths down, respectively up, arm $i$.

	\begin{definition}\label{reconalgebradefn}
	The reconstruction algebra is defined to be the path algebra of $Q,$ subject to the relations induced by the canonical relations, and at every vertex, all 2-cycles that start and stop at that vertex are equal. Thus, the recontruction algebra can be written explicitly as the following relations.
	\vspace{-0.1cm} 
	\begin{align*}
	u_{1i}d_{1i} &= d_{1i+1}u_{1i+1}  ~ \text{for all}~ 1 \leq i \leq p_1-1\\
	u_{2i}d_{2i} &= d_{2i+1}u_{2i+1}   ~\text{for all}~ 1 \leq i \leq p_2-1\\
	u_{3i}d_{3i} &= d_{3i+1}u_{3i+1}  ~ \text{for all}~ 1 \leq i \leq p_3-1\\
	d_{21}u_{21} &= d_{11}u_{11} \\
	d_{21}u_{21} &= d_{31}u_{31} \\
	u_{1p_1}d_{1p_1} &= u_{2p_2}d_{2p_2}\\
	u_{3p_3}d_{3p_3} &= u_{2p_2}d_{2p_2}\\
	D_1 - D_2 &+ D_3 = 0.
	\end{align*}	
	\end{definition}

	\section{The Moduli Space}\label{SecSimRes}
	In this section, the moduli space of representations of quivers of both the reconstruction algebra and the deformed reconstruction algebra is computed. This will be used to achieve simultaneous resolution in \S\ref{SimResSection}.
	
	\subsection{Generalities}\label{Generalities}	
	Consider the dimension vector $\updelta=(1,\hdots,1)$ and the representation variety $\Rep(\mathbb{C}Q,\updelta)$, where $Q$ is the quiver in \eqref{ReconAlgebraQuiver}. We will abuse notation, and for any arm $i$ also write $D_i, d_{i1}, \hdots, d_{ip_i}, U_i, u_{ip_i}, \hdots, u_{i1}$ for the linear maps associated to the arrows with those labels. We consider $\Rep(\mathbb{C}Q,\updelta)$ as an affine space, and write 
	
	\begin{equation}\label{eqnRep}
	\scrR\colonequals \frac{\mathbb{C}[\Rep(\mathbb{C}Q,\updelta)]}{\left<D_1 -  D_2 + D_3  \right>,}
	\end{equation}
	\noindent
	which we identify with the polynomial ring in the number of arrow variables subject to the relation
	
	$$I = \left<D_1 - D_2 + D_3  \right>.$$
	
	The co-ordinate ring carries a natural action of $ G \colonequals \textstyle \prod_{q \in Q_0} \mathbb{C}^{\ast}$ where $Q_0$ denotes the set of vertices of $Q$. The action is via conjugation, namely $ \upmu \in G = \mathbb{C}^\ast \times  \hdots \times \mathbb{C}^\ast$ acts on  $p + I \in \scrR$ as $\upmu \cdot (p + I) \colonequals \upmu \cdot p + I = \upmu^{-1}_{t(p)} p \upmu_{h(p)} + I$. 
	
	\subsection{Moduli of Quiver with one relation}\label{ModuliofReconAlg}
	With respect to the ordering of the vertices as in Section \ref{Preliminaries}, this subsection computes the moduli space of representations of the quiver $Q$ in \S\ref{Preliminaries} subject to the single relation $I = \left<D_1 -  D_2 + D_3 \right>,$ under the dimension vector $\updelta = (1,1, \hdots, 1),$ and King stability condition $\upvartheta_0 = (-(m_1+m_2+m_3 +1),1, \hdots, 1).$\\

	Set $\scrM_{\upvartheta_0} := \scrR~ 
	/\!\!\!\!/_{\upvartheta_0} \mathrm{GL}$ where $\scrR$ is defined in \eqref{eqnRep}, and consider the following open subsets of $\scrM_{\upvartheta_0}$ with either $D_1, ~ D_2~ \text{or} ~D_3 \neq 0$ where $D_i \coloneqq d_{i1} \hdots d_{ip_i},$
	
	\[V^1_{i,j} = \left\{
	\begin{array}{cccc}
	D_1 \neq 0, & d_{21} \hdots d_{2i} \neq 0, & u_{2p_2} \hdots u_{2{i+2}} \neq 0 \\
	&d_{31} \hdots d_{3j} \neq 0, & u_{3p_3} \hdots u_{3{j+2}} \neq 0 
	\end{array}
	\right\}  0 \leq i \leq p_2-1,~
	0 \leq j \leq p_3-1, \]
	
	\[V^2_{i,j} = \left\{
	\begin{array}{cccc}
	D_2 \neq 0, & d_{11} \hdots d_{1i} \neq 0, & u_{1p_1} \hdots u_{1{i+2}} \neq 0 \\
	&d_{31} \hdots d_{3j} \neq 0, & u_{3p_3} \hdots u_{3{j+2}} \neq 0 \\
	\end{array}
	\right\} 0 \leq i \leq p_1-1,~0 \leq j \leq p_3-1,\]
	
	\[V^3_{i,j} = \left\{
	\begin{array}{cccc}
	D_3 \neq 0, & d_{11} \hdots d_{1i} \neq 0, & u_{1p_1} \hdots u_{1{i+2}} \neq 0 \\
	&d_{21} \hdots d_{2j} \neq 0, & u_{2p_2} \hdots u_{2{j+2}} \neq 0 \\
	\end{array}
	\right\} 0 \leq i \leq p_1-1,~0 \leq j \leq p_2-1,\]

\noindent  	
	where by convention, if a product is zero (e.g $d_{21}\hdots d_{2i}$ with $i=0,$ or $u_{3p_3}\hdots u_{3j+2}$ with $j=p_3-1$), then that condition is empty.
	
	\begin{lemma}\label{Cover}
		With notation as above, the open sets $V^1_{i,j}, V^2_{i,j}, V^3_{i,j}$ completely cover the moduli space $\scrM_{\upvartheta_0}$.
	\end{lemma}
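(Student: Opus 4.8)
The plan is to show that any closed point of $\scrM_{\upvartheta_0}$ lies in one of the listed open sets. Since $\scrM_{\upvartheta_0} = \Rep(\mathbb{C}Q,\updelta)/\!\!/_{\upvartheta_0}\mathrm{GL}$ classifies $\upvartheta_0$-semistable representations of dimension vector $(1,\dots,1)$, I would first record two elementary facts. First, because $\upvartheta_0 = (-(m_1+m_2+m_3+1),1,\dots,1)$ is a generic stability condition (in particular $\upvartheta_0\cdot\updelta = 0$ and no proper sub-dimension vector has $\upvartheta_0$-weight $0$), semistability coincides with stability, and $\upvartheta_0$-stability with $\updelta=(1,\dots,1)$ forces the representation to be \emph{supported} in the sense that every vertex is reachable from the exceptional vertex $T$ along nonzero arrows (otherwise the submodule generated by the unreachable vertices would destabilise). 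Second, the single relation $D_1 - D_2 + D_3 = 0$ in $\scrR$ means that if all three of the down-paths $D_1, D_2, D_3$ through the vertices vanish then the relation is automatically satisfied, but stability still has to be checked: the point is that at least one of $D_1, D_2, D_3$ must be nonzero. This last claim is the crux and I address it below.

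Granting that at least one $D_k\neq 0$, say $D_1\neq 0$, I would then analyse arm $2$ and arm $3$ separately. Along arm $2$, consider the chain of vertices from $T$ down to the central vertex $0$ and the arrows $d_{2\bullet}$ (downward) and $u_{2\bullet}$ (upward). I claim there is an index $i$ with $0\le i\le p_2-1$ such that the initial segment $d_{21}\cdots d_{2i}$ of downward arrows is nonzero (this segment being empty, hence "nonzero" by convention, when $i=0$) and the complementary segment of upward arrows $u_{2p_2}\cdots u_{2,i+2}$ is nonzero (again empty when $i=p_2-1$). The reason is a connectedness/stability argument along the arm: each intermediate vertex on arm $2$ must be reachable from $T$, but $T$ reaches the central vertex $0$ through $D_1\neq 0$ already, so the arm-$2$ vertices are reached either from the top (a nonzero prefix of $d_{2\bullet}$'s) or from the bottom through $0$ (a nonzero suffix of $u_{2\bullet}$'s); letting $i$ be the largest index with $d_{21}\cdots d_{2i}\neq 0$ produces exactly the required pair of nonvanishing segments, with the reconstruction-algebra $2$-cycle relations $u_{2j}d_{2j} = d_{2,j+1}u_{2,j+1}$ used to propagate nonvanishing of the upper $u$'s past the break point. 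Running the identical argument on arm $3$ gives an index $j$, and the resulting point lies in $V^1_{i,j}$. The cases $D_2\neq 0$ and $D_3\neq 0$ are symmetric and land in $V^2_{i,j}$ and $V^3_{i,j}$ respectively.

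It remains to prove that a $\upvartheta_0$-stable point cannot have $D_1=D_2=D_3=0$. Suppose it does. Then along every arm the composite of all downward arrows $d_{k1}\cdots d_{kp_k}$ into the central vertex is zero; combined with stability (every vertex reachable from $T$) this forces, on each arm, a nonempty suffix of upward arrows to be nonzero, so in particular each $u_{k1}\neq 0$ (the arrow out of the central vertex $0$ up arm $k$, since $0$ must be reachable and the only arrows into $0$ are the $d_{kp_k}$, all of whose long composites die... here one must be a little careful and instead argue that $0$ is reachable only if some $d_{kp_k}\neq 0$, and then push upward). Chasing this, one shows the central vertex $0$ is \emph{not} reachable from $T$ unless some $D_k\neq 0$: the only paths from $T$ to $0$ run down some arm, and each is a scalar multiple of the corresponding $D_k$. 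Hence $D_1=D_2=D_3=0$ contradicts reachability of $0$, so the representation is not $\upvartheta_0$-stable. I expect this reachability-of-the-central-vertex step, together with the bookkeeping needed to turn "largest nonzero downward prefix" into the precise index ranges $0\le i\le p_2-1$ etc.\ matching the definitions of $V^k_{i,j}$, to be the main obstacle; everything else is a routine unwinding of King stability for a thin dimension vector.
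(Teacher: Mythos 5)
Your proposal is correct and follows essentially the same route as the paper's proof: apply the King-stability criterion for the thin dimension vector (every vertex must admit a nonzero path from the extended vertex, which is the paper's citation of \cite[Remark 3.10]{NCCR}), deduce from reachability of the central vertex that some $D_k\neq 0$, and then take the largest nonvanishing downward prefix on each of the other two arms to land in the appropriate $V^k_{i,j}$; you in fact justify the crux step ``some $D_k\neq 0$'' in more detail than the paper does. One correction: the nonvanishing of the upward suffix $u_{2p_2}\cdots u_{2,i+2}$ is \emph{not} obtained by propagating through the $2$-cycle relations $u_{2j}d_{2j}=d_{2,j+1}u_{2,j+1}$ --- those relations are not imposed in $\scrR$, which carries only the canonical relation $D_1-D_2+D_3$ --- it follows purely from reachability, since the vertices below the zero arrow $d_{2,i+1}$ can only be reached from the central vertex through the consecutive upward arrows. (Your parenthetical indexing slip, reading $u_{k1}$ as the arrow leaving the central vertex rather than $u_{kp_k}$, is harmless since your self-correction arrives at the right argument.)
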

	
	\begin{proof}
		By \cite[Remark 3.10]{NCCR}, a $\mathbb{C}Q/I$ module $M$ of dimension vector $(1,1, \hdots, 1)$ is $\upvartheta_0$-stable if and only if
		for every vertex in the quiver representation of $M$, there exists a non-zero path from the zero vertex to that vertex. So, if we consider the bottom vertex of $M,$ we must have either $D_1\neq 0, ~ D_2\neq 0$ or $D_3 \neq 0.$ By symmetry, suppose that $D_1\neq 0.$ For convenience in the argument below, label the arrows and vertices of $Q$ as follows
		
		\[
		\begin{array}{cc}
		 &
		\begin{array}{c}
		\\
		\begin{tikzpicture}[xscale=1.3,yscale=1.2,bend angle=40, looseness=1]
		\node (0) at (0,0) [vertex] {};
		\node (B1) at (-2,1) [vertex] {};
		\node (B2) at (-2,2) [vertex] {};
		\node (B3) at (-2,3) [vertex] {};
		\node (B4) at (-2,4) [vertex] {};
		\node (C1) at (0,1) [vertex] {};
		\node (C2) at (0,2) [vertex] {};
		\node (C3) at (0,3) [vertex] {};
		\node (C4) at (0,4) [vertex] {};
		\node (n1) at (2,1) [vertex] {};
		\node (n2) at (2,2) [vertex] {};
		\node (n3) at (2,3) [vertex] {};
		\node (n4) at (2,4) [vertex] {};
			\node at (-1.6,1) {$\scriptstyle (11)$};
		\node at (-1.6,2) {$\scriptstyle (12)$};
		\node at (-1.4,3) {$\scriptstyle  (1m_1-1)$};
		\node at (-1.5,4) {$\scriptstyle  (1m_1)$};
		\node at (0.4,1) {$\scriptstyle  (21)$};
		\node at (0.4,2) {$\scriptstyle  (22)$};
		\node at (0.6,3) {$\scriptstyle  (2m_2-1)$};
		\node at (0.5,4) {$\scriptstyle  (2m_2)$};
		\node at (2.4,1) {$\scriptstyle (31)$};
		\node at (2.4,2) {$\scriptstyle (32)$};
		\node at (2.6,3) {$\scriptstyle (3m_3-1)$};
		\node at (2.5,4) {$\scriptstyle (3m_3)$};
		\node at (-2,2.6) {$\vdots$};
		\node at (0,2.6) {$\vdots$};
		\node at (2,2.6) {$\vdots$};
		\node (T) at (0,5) [cvertex] {};
		\draw [->] (B1) --node[above,pos=0.5]{$\scriptstyle d_{1p_1}$} (0);
		\draw [->] (C1) --node[right,pos=0.2]{$\scriptstyle d_{2p_2}$} (0);
		\draw [->] (n1) --node[gap,pos=0.6]{$\scriptstyle d_{3p_3}$} (0);
		\draw [->] (B2) --node[right,pos=0.5]{$\scriptstyle d_{1p_1-1}$}(B1);
		\draw [->] (C2) --node[right,pos=0.5]{$\scriptstyle d_{2p_2-1}$}(C1);
		\draw [->] (n2) --node[right,pos=0.5]{$\scriptstyle d_{3p_3-1}$} (n1);
		\draw [->] (B4) --node[right,pos=0.5]{$\scriptstyle d_{12}$}(B3);
		\draw [->] (C4) --node[right,pos=0.5]{$\scriptstyle d_{22}$}(C3);
		\draw [->] (n4) -- node[right,pos=0.5]{$\scriptstyle d_{32}$} (n3);
		\draw [->] (T) -- node[below,pos=0.4]{$\scriptstyle d_{11}$}(B4);
		\draw [->] (T) -- node[right,pos=0.6]{$\scriptstyle d_{21}$}(C4);
		\draw [->] (T) -- node[gap,pos=0.5]{$\scriptstyle d_{31}$}(n4);
		\draw [bend right, bend angle=10, looseness=0.5, <-, red] (B1)+(-50:4.5pt) to node[left,pos=0.2]{$\scriptstyle u_{1p_1}$}($(0) + (160:4.5pt)$);
		\draw [bend right, <-, red](C1) to node[gap,pos=0.5]{$\hspace{-0.2em}\scriptstyle u_{2p_2}$} (0);
		\draw [bend right, bend angle=10, looseness=0.5, <-, red](n1) to node[gap,pos=0.5]{$\scriptstyle u_{3p_3}$} (0);
		\draw [bend right, <-, red] (B2) to node[gap,pos=0.3]{$\scriptstyle u_{1p_1-1}$}(B1);
		\draw [bend right, <-, red] (C2) to node[gap,pos=0.3]{$\scriptstyle u_{2p_2-1}$}(C1);
		\draw [bend right, <-, red] (n2) to node[left,pos=0.5]{$\scriptstyle u_{3p_3-1}$}(n1);
		\draw [bend right, <-, red] (B4) to node[gap,pos=0.5]{$\scriptstyle u_{12}$}(B3);
		\draw [bend right, <-, red] (C4) to node[gap,pos=0.5]{$\scriptstyle u_{22}$}(C3);
		\draw [bend right, <-, red] (n4) to node[left,pos=0.5]{$\scriptstyle u_{32}$}(n3);
		\draw [bend right, bend angle=10, looseness=0.5, <-, red] (T)+(-155:4.5pt) to node[gap,pos=0.6]{$\scriptstyle u_{11}$}($(B4) + (60:4.5pt)$);
		\draw [bend right, <-, red] (T) to node[gap,pos=0.5]{$\scriptstyle u_{21}$}(C4);
		\draw [bend right, bend angle=10, looseness=0.5,  <-, red] (T) to node[gap,pos=0.5]{$\scriptstyle u_{31}$}(n4);
		 [decorate,decoration={brace,amplitude=5pt,mirror},xshift=4pt,yshift=0pt]
		\end{tikzpicture}
		\end{array}
		\end{array}
		\]
		
		\smallskip
		\noindent
		If $d_{21} = 0$ and $d_{31} = 0$ then the only way a non-zero path can reach vertex $(2m_2)$ and vertex $(3m_3)$ is if $D_1u_{2p_2} \hdots u_{22} \neq 0$ and $D_1u_{3p_3} \hdots u_{32} \neq 0$ respectively. In this case $M$ belongs to $V^1_{0,0}.$ If $d_{21} \hdots  d_{2p_2-1} \neq 0$ and $d_{31} \hdots  d_{3p_3-1} \neq 0$ then by a similar argument $M$ is in $V^1_{p_2-1,p_3-1}.$ Hence we can assume that some $d$ on arm 2, and some $d$ on arm 3 is equal to zero. In each arm, consider the $d$ closest to the top vertex which is zero: thus
		\begin{align*}
		d_{21} \hdots  d_{2i} \neq 0 &~\text{and}~ d_{2i+1} = 0\\
		d_{31} \hdots  d_{3j} \neq 0 &~\text{and} ~ d_{3j+1} = 0,
		\end{align*}
		and therefore $M$ is in $V^1_{i,j}.$

	\end{proof}
	\noindent
	From now on, to ease notation set $\mathbb{C}[U^k_{i,j}] = \mathbb{C}[V^k_{i-1,j-1}]$ for $k = 1, 2, 3.$

	\begin{prop}\label{hypersurfaces} With notation as above, the following statements hold.
		\begin{align*}
		\mathbb{C}[U^1_{i,j}] \cong \frac{\mathbb{C}[u_{11}, \hdots, u_{1p_1}, u_{21}, \hdots,  u_{2i}, d_{2i}, \hdots , d_{2p_2}, u_{31}, \hdots,  u_{3j}, d_{3j}, \hdots , d_{3p_3}]}{\left( 1 -  d_{2i} \hdots  d_{2p_2} + d_{3j} \hdots  d_{3p_3}\right)},\\[2mm]
		\mathbb{C}[U^2_{i,j}] \cong \frac{\mathbb{C}[u_{21}, \hdots, u_{2p_2}, u_{11}, \hdots,  u_{1i}, d_{1i}, \hdots , d_{1p_1}, u_{31}, \hdots,  u_{3j}, d_{3j}, \hdots , d_{3p_3}]}{\left( d_{1i} \hdots  d_{1p_1} -  1 + d_{3j} \hdots  d_{3p_3}\right)},\\[2mm]
		\mathbb{C}[U^3_{i,j}] \cong \frac{\mathbb{C}[u_{31}, \hdots, u_{3p_3}, u_{11}, \hdots,  u_{1i}, d_{1i}, \hdots , d_{1p_1}, u_{21}, \hdots,  u_{2j}, d_{2j}, \hdots , d_{2p_2}]}{\left( d_{1i} \hdots  d_{1p_1} -  d_{2j} \hdots  d_{2p_2} + 1\right)}.
		\end{align*}
	\end{prop}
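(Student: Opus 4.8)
The plan is to compute each affine chart $\mathbb{C}[U^k_{i,j}]$ directly from its defining open set $V^k_{i-1,j-1}$ inside $\scrM_{\upvartheta_0}$, using the fact that on such a chart we may normalise the non-vanishing paths by the torus $G$ and then read off which arrow variables survive and which relation remains. By the symmetry of the quiver $Q$ under cyclically permuting the three arms, it suffices to treat one case, say $\mathbb{C}[U^1_{i,j}]$, and obtain the other two by relabelling; I would state this reduction at the start. So fix the open set $V^1_{i-1,j-1}$, on which $D_1 \neq 0$, the partial downward products $d_{21}\cdots d_{2(i-1)}$ and $d_{31}\cdots d_{3(j-1)}$ are nonzero, and the partial upward products $u_{2p_2}\cdots u_{2(i+1)}$ and $u_{3p_3}\cdots u_{3(j+1)}$ are nonzero (with the usual convention that empty products impose no condition).

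\textbf{Key steps.} First I would invoke the standard description of the GIT quotient on a torus-invariant open set where a spanning set of paths is invertible: using the $(p_1+p_2+p_3)$-dimensional torus $G$ modulo the diagonal $\mathbb{C}^\ast$, one can gauge-fix exactly one arrow in each of the nonvanishing ``trees'' to the value $1$. Concretely, along arm $1$ all of $d_{11},\dots,d_{1p_1}$ are nonzero (since $D_1\neq 0$ forces each factor nonzero), so we may set each $d_{1\bullet}=1$ and retain the $u_{1\bullet}$; along arm $2$ the nonzero arrows are $d_{21},\dots,d_{2(i-1)}$ going down from the top and $u_{2p_2},\dots,u_{2(i+1)}$ going up from the bottom — gauge-fixing these leaves the free variables $u_{21},\dots,u_{2(i-1)}$, the ``turnaround'' pair $u_{2i}, d_{2i}$, and $d_{2(i+1)},\dots,d_{2p_2}$; similarly for arm $3$. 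After relabelling $V^1_{i-1,j-1}$ as $U^1_{i,j}$ this yields exactly the polynomial ring written in the statement. Second, I would track what the single relation $D_1 - D_2 + D_3 = 0$ becomes: divide through by the invertible quantity $D_1$ (which after the gauge fix on arm $1$ equals $1$), and rewrite $D_2/D_1$ and $D_3/D_1$ in the surviving coordinates. Here one uses the 2-cycle relations of the reconstruction algebra? — no: crucially, $\scrR$ is the path algebra modulo \emph{only} the canonical relation, so $D_2$ and $D_3$ must be expressed purely in terms of the arrow variables using the gauge fixing, not the mesh relations. The point is that on $V^1_{i-1,j-1}$ the product $D_2 = d_{21}\cdots d_{2p_2}$ telescopes: the initial factors $d_{21}\cdots d_{2(i-1)}$ have been set to $1$, the factor $d_{2i}$ survives, and the tail $d_{2(i+1)}\cdots d_{2p_2}$ survives, so $D_2$ becomes $d_{2i}d_{2(i+1)}\cdots d_{2p_2}$, i.e. $d_{2i}\cdots d_{2p_2}$ in the notation of the statement; likewise $D_3$ becomes $d_{3j}\cdots d_{3p_3}$. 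The relation $D_1-D_2+D_3=0$ therefore reads $1 - d_{2i}\cdots d_{2p_2} + d_{3j}\cdots d_{3p_3} = 0$, which is the displayed hypersurface equation. Finally I would check that no further relations or further localisations are hidden: the remaining nonzero-path conditions defining $V^1_{i-1,j-1}$ (such as $u_{2p_2}\cdots u_{2(i+1)}\neq 0$) are automatically implied once we are on the affine chart, because these arrows have been gauge-fixed to $1$, so the chart is genuinely the affine hypersurface with no extra open conditions.

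\textbf{Main obstacle.} The step I expect to require the most care is the gauge-fixing / GIT computation itself: justifying rigorously that on $V^k_{i-1,j-1}$ the $\upvartheta_0$-semistable locus modulo $\mathrm{GL}$ is \emph{isomorphic} (not merely birational) to the explicit affine hypersurface, i.e. that the torus acts with trivial stabiliser on this locus and that the quotient map admits the stated section by setting the chosen arrows to $1$. This amounts to verifying that the chosen non-vanishing paths form, in each case, a complete set of ``edges'' realising a spanning tree of the quiver's underlying graph (rooted at the $0$th vertex), so that the $(|Q_0|-1)$-dimensional effective torus acts simply transitively on their values; one must confirm that the conditions cut out by $V^k_{i-1,j-1}$ were designed precisely so that this spanning-tree property holds for every admissible $(i,j)$, including the boundary cases $i\in\{0,p_2-1\}$ and $j\in\{0,p_3-1\}$ where some products are empty. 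Once that bookkeeping is done, substituting into $D_1-D_2+D_3=0$ and simplifying is routine.
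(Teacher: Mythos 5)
Your proposal is correct and follows essentially the same route as the paper: gauge-fix the non-vanishing arrows of $V^1_{i-1,j-1}$ to $1$ via the torus action, observe that the surviving arrow variables are exactly those listed, and substitute into the single relation $D_1-D_2+D_3=0$ to get $1-d_{2i}\cdots d_{2p_2}+d_{3j}\cdots d_{3p_3}$, with the other two charts by symmetry. The only difference is that you make explicit the spanning-tree justification for the gauge-fixing (which the paper compresses into ``after changing basis we set the specified non-zero arrows to be $1$''); note in passing that $G$ has dimension $p_1+p_2+p_3-1$, not $p_1+p_2+p_3$, though the count of fixed arrows still matches the effective torus dimension.
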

	\normalsize
	\begin{proof}\emph{Case 1.} For $U^1_{i,j}$, after changing basis we may (and do) set the specified non-zero arrows to be $1$.  Exactly as in e.g. \cite[\S4]{TypeA}, this then exhausts the action of the group $\mathrm{GL}$, and thus we may read off the co-ordinates of the chart directly.
		\noindent	
		The only possible non-identity arrows on arm 1 are 
		\[u_{11}, \hdots, u_{1p_1},\] 
		
		\noindent
		the only possible non-identity arrows on arm 2 are 
		\[u_{21}, \hdots, u_{2i-1}, u_{2i}, d_{2i}, d_{2i+1}, \hdots, d_{2p_2},\]
		
		\noindent
		and the only possible non-identity arrows on arm 3 are 
		\[u_{31}, \hdots, u_{3j-1}, u_{3j}, d_{3j}, d_{3j+1}, \hdots , d_{3p_3}.\]
		
		\noindent
		Since
		$D_1=1,~D_2=d_{2i}d_{2i+1} \hdots d_{2p_2},~D_3=d_{3j}d_{3j+1} \hdots  d_{3p_3}$ the only relation is $1 -  d_{2i} \hdots  d_{2p_2} + d_{3j} \hdots  d_{3p_3},$ as claimed.

		\smallskip
		\noindent
		\emph{Case 2.} The proof of $\mathbb{C}[U^2_{i,j}]$ and $\mathbb{C}[U^3_{i,j}]$ are similar to that of $\mathbb{C}[U^1_{i,j}]$ above.
	\end{proof}
	
	\begin{cor}\label{ModuliSmooth}
		$\scrM_{\upvartheta_0}$ is smooth.
	\end{cor}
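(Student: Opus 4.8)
The plan is to deduce smoothness directly from the explicit coordinate rings computed in Proposition \ref{hypersurfaces}, together with the fact (Lemma \ref{Cover}) that the open sets $U^1_{i,j}, U^2_{i,j}, U^3_{i,j}$ cover $\scrM_{\upvartheta_0}$. Since smoothness is a local property, it suffices to show that each of these affine charts is smooth. By Proposition \ref{hypersurfaces}, each chart is a hypersurface: $\mathbb{C}[U^1_{i,j}]$ is the quotient of a polynomial ring in some set of variables by the single equation $1 - d_{2i}\cdots d_{2p_2} + d_{3j}\cdots d_{3p_3} = 0$, and similarly for $U^2_{i,j}$ and $U^3_{i,j}$. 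So the task reduces to checking that each of these hypersurfaces is smooth, i.e.\ that the relevant polynomial, call it $f$, has the property that $f$ and its partial derivatives never simultaneously vanish on the zero locus.

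First I would treat the chart $U^1_{i,j}$. Here $f = 1 - d_{2i}\cdots d_{2p_2} + d_{3j}\cdots d_{3p_3}$. The variables $d_{2i},\dots,d_{2p_2}$ and $d_{3j},\dots,d_{3p_3}$ are among the free variables of the polynomial ring (the $u$-variables do not appear in $f$ at all). If at some point of the zero locus all partials of $f$ vanish, then in particular $\partial f/\partial d_{2i} = -d_{2i+1}\cdots d_{2p_2} = 0$ and likewise every partial in a $d_2$-variable vanishes; the same for the $d_3$-variables. A short case analysis shows this forces $d_{2i}\cdots d_{2p_2} = 0$ and $d_{3j}\cdots d_{3p_3} = 0$ (if a product of two or more factors is nonzero, at least one of the "omit-one-factor" subproducts is nonzero, contradicting vanishing of the corresponding partial; and a single-variable product has partial $\pm 1$). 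But then $f = 1 - 0 + 0 = 1 \neq 0$, contradicting that we are on the zero locus. Hence the Jacobian criterion is satisfied everywhere and $U^1_{i,j}$ is smooth. The boundary conventions (when $i = p_2-1$ the product $d_{2p_2}\cdots d_{2p_2}$ is a single variable, when $i=0$ the chart reads slightly differently, etc.) only make the argument easier, since a single-variable product automatically has a nonvanishing partial.

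The charts $U^2_{i,j}$ and $U^3_{i,j}$ are handled by the identical argument, since their defining equations $d_{1i}\cdots d_{1p_1} - 1 + d_{3j}\cdots d_{3p_3}$ and $d_{1i}\cdots d_{1p_1} - d_{2j}\cdots d_{2p_2} + 1$ have exactly the same shape — two monomials in disjoint sets of free variables plus the constant $\pm 1$ — so the same reasoning shows the Jacobian ideal together with $f$ generates the unit ideal. Since every chart is smooth and the charts cover $\scrM_{\upvartheta_0}$, the moduli space is smooth. I do not anticipate a genuine obstacle here: the only point requiring a little care is the bookkeeping of the degenerate boundary cases in the index ranges $0 \le i \le p_k - 1$, and in every such case the defining equation degenerates to something even simpler (indeed often to a coordinate hyperplane), so smoothness is immediate.
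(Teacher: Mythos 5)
Your proof is correct and follows the same route as the paper: cover $\scrM_{\upvartheta_0}$ by the charts of Proposition \ref{hypersurfaces} and apply the Jacobian criterion to each hypersurface. The paper simply asserts that $1$ lies in the ideal $K$ generated by $f$ and its partials, whereas you supply the (correct) verification — equivalently, one can note $d_{2i}\,\partial f/\partial d_{2i}=-d_{2i}\cdots d_{2p_2}\in K$ and $d_{3j}\,\partial f/\partial d_{3j}=d_{3j}\cdots d_{3p_3}\in K$, so $1=f+d_{2i}\cdots d_{2p_2}-d_{3j}\cdots d_{3p_3}\in K$ — so your write-up is, if anything, more complete.
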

	
	\begin{proof}
		By Lemma \ref{Cover}, $\scrM_{\upvartheta_0}$ is covered by the open sets $U^1_{i,j}, U^2_{i,j}, U^3_{i,j},$ so it suffices to prove that each of these is smooth. By Proposition \ref{hypersurfaces}, set $f =1 -  d_{2i} \hdots  d_{2p_2} + d_{3j} \hdots  d_{3p_3}$ and consider the ideal
		\[K= \left(f,~\frac{\partial f}{\partial v}\mid v~ \text{is variable in}~ \mathbb{C}[U^1_{i,j}]\right).\]
		\noindent
		After taking partial derivatives with respect to $d_{2i}$ and $d_{3j}$, for example, it becomes clear that $1\in K$, so as is standard \cite[Algorithm 5.7.6]{Greuel}, this implies that $\mathbb{C}[U^1_{i,j}]$ is smooth.  The proof for $\mathbb{C}[U^2_{i,j}]$ and $\mathbb{C}[U^3_{i,j}]$ is identical.
		Thus, $\scrM_{\upvartheta_0}$ is smooth.
	\end{proof}

	\subsection{The Deformed Reconstruction Algebras}\label{Deformed}
	In this subsection we introduce a deformed version of the reconstruction algebra, and compute its moduli space $\scrM_{\upvartheta_0}(\Lambda_{\boldsymbol{\upgamma}},~\updelta)$.
	\begin{definition}\label{DeformedReconAlgbera}
		Given scalars $\boldsymbol{\upgamma} \in \mathbb{C}^{\oplus p_1-1} \oplus \mathbb{C}^{\oplus  p_2 -1 } \oplus \mathbb{C}^{\oplus  p_3 -1 } \oplus \mathbb{C}^{\oplus  4 },$ write $\boldsymbol{\upgamma} = (\boldsymbol{\upgamma}_{1}, \boldsymbol{\upgamma}_{2}, \boldsymbol{\upgamma}_{3},$ $A, B, a, b) ~\text{where}~ \boldsymbol{\upgamma}_{1} = (\upgamma_{11}, \hdots,\upgamma_{1p_1-1} ), \boldsymbol{\upgamma}_{2} = (\upgamma_{21}, \hdots,\upgamma_{2p_2-1} ), \boldsymbol{\upgamma}_{3} = (\upgamma_{31}, \hdots,\upgamma_{3p_3-1} ).$ Then the deformed reconstruction algebra $\Lambda_{\boldsymbol{\upgamma}}$ is defined to be the path algebra of the quiver $Q,$ subject to the following relations. 	
	\end{definition}
	\vspace{-0.7cm}\[
	\begin{array}{ccccc}
	(1)&~u_{1i}d_{1i} - d_{1i+1}u_{1i+1} &=  &\upgamma_{1i} &~ \text{for all}~ 1 \leq i \leq p_1-1\\
	(2)&~u_{2i}d_{2i} - d_{2i+1}u_{2i+1} &=  &\upgamma_{2i} &~ \text{for all}~ 1 \leq i \leq p_2-1\\
	(3)&~u_{3i}d_{3i} - d_{3i+1}u_{3i+1} &=  &\upgamma_{3i}&~ \text{for all}~ 1 \leq i \leq p_3-1\\
	(a)&~d_{21}u_{21} - d_{11}u_{11} &=  &a&\\
	(b)&~d_{21}u_{21} - d_{31}u_{31} &=  &b&\\
	(c)&~u_{1p_1}d_{1p_1} - u_{2p_2}d_{2p_2} &=  &A&\\
	(d)&~u_{3p_3}d_{3p_3} - u_{2p_2}d_{2p_2} &= &B&\\
	(x)&~d_{11} \hdots d_{1p_1} - d_{21} \hdots d_{2p_2} + d_{31} \hdots d_{3p_3}&= &0.&
	\end{array}\]  
	Note that $\Lambda_{\boldsymbol{0}}$ is the reconstruction algebra defined in Definition \ref{reconalgebradefn} earlier.
	\begin{notation}\label{Notationrel}
		Set
		\[
		\Delta := \{\boldsymbol{\upgamma} \in \mathbb{C}^{\oplus p_1 + p_2 + p_3 + 1}  \mid \sum_{i=1}^{p_{1}-1} \upgamma_{1i} - \sum_{i=1}^{p_{2}-1} \upgamma_{2i} + A + a =0 ~\text{and}~ \sum_{i=1}^{p_{3}-1} \upgamma_{3i} - \sum_{i=1}^{p_{2}-1} \upgamma_{2i} + B + b =0\}\label{def:Delta}.
		\]	
	\end{notation}
	
	\begin{remark}
		If $\boldsymbol{\upgamma} \in \Delta,$ then $\sum_{i=1}^{p_{1}-1} \upgamma_{1i} - \sum_{i=1}^{p_{3}-1} \upgamma_{3i} + (A-B) + (a-b) =0.$
	\end{remark}
	
	\begin{proof}
		For $\boldsymbol{\upgamma} \in \Delta,$
		\[ \sum_{i=1}^{p_{1}-1} \upgamma_{1i} - \sum_{i=1}^{p_{2}-1} \upgamma_{2i} + A + a= \sum_{i=1}^{p_{3}-1} \upgamma_{3i} - \sum_{i=1}^{p_{2}-1} \upgamma_{2i} + B + b  =0  \] and hence the result follows.
	\end{proof}
	
	\begin{remark}\label{gammainDelta}
		If  $\boldsymbol{\upgamma} \notin \Delta,$ then $\Rep(\Lambda_{\boldsymbol{\upgamma}},~\updelta) = \emptyset.$ Certainly, given $\boldsymbol{\upgamma} \notin \Delta,$ then either $\sum_{i=1}^{p_{1}-1} \upgamma_{1i} - \sum_{i=1}^{p_{2}-1} \upgamma_{2i} + A + a \neq 0 ~\text{or}~ \sum_{i=1}^{p_{3}-1} \upgamma_{3i} - \sum_{i=1}^{p_{2}-1} \upgamma_{2i} + B + b \neq 0.$ If $M \in \Rep(\Lambda_{\boldsymbol{\upgamma}},~\updelta),$ then its linear maps between vertices are scalars, which satisfy the relations of $\Lambda_{\boldsymbol{\upgamma}}.$ Now these scalars commute, and as a result of summing up the relations $(1) - (2) + (a) + (c)$ gives $\sum_{i=1}^{p_{1}-1} \upgamma_{1i} - \sum_{i=1}^{p_{2}-1} \upgamma_{2i} + A + a = 0$ and summing the relations $(3) - (2) + (b) + (d)$ gives $\sum_{i=1}^{p_{3}-1} \upgamma_{3i} - \sum_{i=1}^{p_{2}-1} \upgamma_{2i} + B + b = 0,$ which is a contradiction. This is why below we always assume that $\boldsymbol{\upgamma} \in \Delta.$
	\end{remark}
	
	Now consider $\scrM_{\upvartheta_0}(\Lambda_{\boldsymbol{\upgamma}},~\updelta) \coloneqq \Rep(\Lambda_{\boldsymbol{\upgamma}},~\updelta)~ /\!\!\!\!/_{\upvartheta_0}~ \mathrm{GL},$ and define open subsets $V^1_{i,j}, V^2_{i,j}, V^3_{i,j}$ in an identical manner to \S \ref{ModuliofReconAlg}. 
	
	\begin{prop}\label{modulicoverDeformed}
		If  $\boldsymbol{\upgamma} \in \Delta,$ then the following open sets $U^1_{i,j}, U^2_{i,j}, U^3_{i,j},$	cover the moduli space $\scrM_{\upvartheta_0}(\Lambda_{\boldsymbol{\upgamma}},~\updelta).$ 
		
		\noindent
		$\mathbb{C}[U^1_{i,j}]$ is isomorphic to $\mathbb{C}[d_{2i}, u_{2i}, d_{3j}, u_{3j}]$ modulo the ideal generated by the relations
		\begin{align*}
		d_{2i} u_{2i} + \sum_{l=1}^{i-1} \upgamma_{2l} - d_{3j} u_{3j} - \sum_{l=1}^{j-1} \upgamma_{3l} - b,\\
		1 -  d_{2i}(d_{2i}u_{2i} - \upgamma_{2i}) \hdots (d_{2i}u_{2i} - \sum_{l=i}^{p_2-1}\upgamma_{2l}) + d_{3j}(d_{3j}u_{3j} - \upgamma_{3j}) \hdots (d_{3j}u_{3j} - \sum_{l=j}^{p_3-1}\upgamma_{3l}).
		\end{align*}
		
		\noindent
		$\mathbb{C}[U^2_{i,j}]$ is isomorphic to $\mathbb{C}[d_{1i}, u_{1i}, d_{3j}, u_{3j}]$ modulo the ideal generated by the relations 
		
		\begin{align*}  
		d_{1i} u_{1i} + \displaystyle\sum_{l=1}^{i-1} \upgamma_{1l}  - d_{3j} u_{3j} - \displaystyle\sum_{l=1}^{j-1} \upgamma_{3l} - (b-a),\\ d_{1i}(d_{1i}u_{1i} - \upgamma_{1i}) \hdots (d_{1i}u_{1i} - \displaystyle\sum_{l=i}^{p_1-1}\upgamma_{1l}) -  1 + d_{3j}(d_{3j}u_{3j} - \upgamma_{3j}) \hdots (d_{3j}u_{3j} - \displaystyle\sum_{l=j}^{p_3-1}\upgamma_{3l}).
		\end{align*}
		
		\noindent
		$\mathbb{C}[U^3_{i,j}]$ is isomorphic to $\mathbb{C}[d_{1i}, u_{1i}, d_{2j}, u_{2j}]$ modulo the ideal generated by the relations 
		
		\begin{align*}  
		d_{2j} u_{2j} + \displaystyle\sum_{l=1}^{j-1} \upgamma_{2l} - d_{1i} u_{1i} - \displaystyle\sum_{l=1}^{i-1} \upgamma_{1l} - a,\\ 
		d_{1i}(d_{1i}u_{1i} - \upgamma_{1i}) \hdots (d_{1i}u_{1i} - \displaystyle\sum_{l=i}^{p_1-1}\upgamma_{1l}) -  d_{2j}(d_{2j}u_{2j} - \upgamma_{2j}) \hdots (d_{2j}u_{2j} - \displaystyle\sum_{l=j}^{p_2-1}\upgamma_{2l}) + 1.
		\end{align*}

	\end{prop}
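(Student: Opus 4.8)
The plan is to run, with the deformation parameters $\boldsymbol{\upgamma}$ carried along, the gauge-fixing argument of Proposition~\ref{hypersurfaces}, and to pin down precisely where the hypothesis $\boldsymbol{\upgamma}\in\Delta$ enters. For the covering: since relation $(x)$ is unchanged in $\Lambda_{\boldsymbol{\upgamma}}$, any $\upvartheta_0$-stable $\Lambda_{\boldsymbol{\upgamma}}$-module of dimension vector $\updelta$ is in particular a $\upvartheta_0$-stable $\mathbb{C}Q/I$-module; by Remark~\ref{gammainDelta} these exist exactly when $\boldsymbol{\upgamma}\in\Delta$, and then the covering of $\scrM_{\upvartheta_0}(\Lambda_{\boldsymbol{\upgamma}},\updelta)$ by the identically defined open sets $V^1_{i,j},V^2_{i,j},V^3_{i,j}$ is immediate from Lemma~\ref{Cover}.

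Next I would treat the chart $U^1_{i,j}=V^1_{i-1,j-1}$. As in Proposition~\ref{hypersurfaces}, change basis to normalise to $1$ the arrows forced to be non-zero there, namely all of $d_{11},\dots,d_{1p_1}$, together with $d_{21},\dots,d_{2,i-1}$ and $u_{2,i+1},\dots,u_{2p_2}$ on arm $2$ and $d_{31},\dots,d_{3,j-1}$ and $u_{3,j+1},\dots,u_{3p_3}$ on arm $3$. These $p_1+p_2+p_3-2$ arrows form a spanning tree of the underlying graph of $Q$, so normalising them rigidifies the $\mathrm{GL}(\updelta)/\mathbb{C}^{\ast}$-action and produces a slice. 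With this in force one eliminates the remaining arrows using the relations of $\Lambda_{\boldsymbol{\upgamma}}$:
\begin{itemize}
\item the relations $(2)$ of index $\ge i$ telescope, using $u_{2,i+1}=\dots=u_{2p_2}=1$, to $d_{2,i+k}=d_{2i}u_{2i}-\sum_{l=i}^{i+k-1}\upgamma_{2l}$, so that $D_2=d_{2i}(d_{2i}u_{2i}-\upgamma_{2i})\cdots(d_{2i}u_{2i}-\sum_{l=i}^{p_2-1}\upgamma_{2l})$; symmetrically for $D_3$, while $D_1=1$;
\item the relations $(2)$ of index $<i$ give $d_{21}u_{21}=d_{2i}u_{2i}+\sum_{l=1}^{i-1}\upgamma_{2l}$, and symmetrically $d_{31}u_{31}=d_{3j}u_{3j}+\sum_{l=1}^{j-1}\upgamma_{3l}$;
\item relation $(a)$ then gives $u_{11}=d_{21}u_{21}-a$, and the relations $(1)$ express $u_{12},\dots,u_{1p_1}$ in terms of $u_{11}$.
\end{itemize}
Substituting, relation $(b)$ becomes the first displayed generator and relation $(x)$ the second, yielding a surjection from $\mathbb{C}[d_{2i},u_{2i},d_{3j},u_{3j}]$ modulo those two relations onto $\mathbb{C}[U^1_{i,j}]$. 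It then remains to see that relations $(c)$ and $(d)$ impose nothing further: substituting the formulas above into $(c)$ and cancelling leaves exactly $\sum_{l=1}^{p_1-1}\upgamma_{1l}-\sum_{l=1}^{p_2-1}\upgamma_{2l}+A+a=0$, and $(d)$ leaves the second defining equation of $\Delta$, both of which hold by hypothesis; hence the surjection is an isomorphism. The charts $U^2_{i,j}$ and $U^3_{i,j}$ follow from the same computation with the three arms permuted, relation $(x)$ again giving the second generator, relations $(c),(d)$ again becoming vacuous modulo $\Delta$, and the first generator now coming from $(b)-(a)$, respectively $(a)$ — which is where the constant $b$ is replaced by $b-a$, respectively $a$; I would write $U^2_{i,j}$ out in detail and note that $U^3_{i,j}$ is analogous.

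The only real content beyond the bookkeeping, and the step I expect to be the main obstacle, is this last verification: that $\boldsymbol{\upgamma}\in\Delta$ is exactly the condition making relations $(c)$ and $(d)$ redundant after the substitutions — equivalently, that the telescoping identities behind Remark~\ref{gammainDelta} are precisely the compatibility needed to extend the slice across $(c)$ and $(d)$ — so that one is genuinely left with two equations rather than four. The extreme values $i\in\{1,p_2\}$ and $j\in\{1,p_3\}$, and their analogues in the other charts, require only the empty-product/empty-sum conventions already adopted, and the check that the normalised arrows form a slice is routine given that they form a spanning tree.
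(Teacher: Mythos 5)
Your proposal is correct and follows essentially the same route as the paper: normalise the non-vanishing arrows of the chart to $1$, telescope relations $(1)$--$(3)$ to express all arrows on each arm in terms of $(d_{2i},u_{2i})$, $(d_{3j},u_{3j})$ and $u_{11}$, keep $(b)$ (respectively $(b)-(a)$, $(a)$ in the other charts) and $(x)$ as the two surviving generators, and observe that $(c)$ and $(d)$ reduce exactly to the defining equations of $\Delta$. Your explicit spanning-tree justification of the slice and the identification of precisely where $\boldsymbol{\upgamma}\in\Delta$ is used are details the paper leaves implicit, but the argument is the same.
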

	\normalsize
	\begin{proof}The fact these cover the moduli space is identical to Lemma \ref{Cover}. To compute these opens for $\mathbb{C}[U^1_{i,j}],$ consider the relations in  Definition \ref{DeformedReconAlgbera}. Then the relations $(1)$ become 
		\[
		u_{1i} - u_{1i+1} =   \upgamma_{1i},\\
		\]
	
		which implies that 
		\begin{align*}
		u_{12} &=   u_{11} - \upgamma_{11}\\
		u_{13} &=  u_{11}- \upgamma_{11}-  \upgamma_{12}\\
		&\vdots\\
		u_{1p_1} &=  u_{11} -  \displaystyle\sum_{l=1}^{p_1-1}\upgamma_{1l}. 
		\end{align*}
		Therefore, the arrows $u_{1i}$ can be expressed in terms of $u_{11}.$
		\noindent
		The relations $(2)$ become
		\begin{align*}
		u_{21} - u_{22} &=    \upgamma_{21}\\
		u_{22} - u_{23} &=    \upgamma_{22}\\
		&\vdots\\
		u_{2i}d_{2i} - d_{2i+1} &=    \upgamma_{2i}\\
		&\vdots\\
		d_{2p_2-2} - d_{2p_2-1} &=    \upgamma_{2p_2-2}\\
		d_{2p_2-1} - d_{2p_2} &=    \upgamma_{2p_2-1}.
		\end{align*}
		Re-writing the middle as $ d_{2i+1} =  u_{2i}d_{2i} -  \upgamma_{2i},$ working up and down gives 
		\begin{align*}
		u_{21}  &=    u_{2i}d_{2i} + \displaystyle\sum_{l=1}^{i-1}\upgamma_{2l}\\
		&\vdots\\
		u_{2i-1}  &=  u_{2i}d_{2i} +  \upgamma_{2i-1}\\
		d_{2i+1} &=  u_{2i}d_{2i} -  \upgamma_{2i}\\
		d_{2i+2} &=  u_{2i}d_{2i} -  \upgamma_{2i}-  \upgamma_{2i+1}\\
		&\vdots\\
		d_{2p_2}  &= u_{2i}d_{2i} -  \displaystyle\sum_{l=i}^{p_2-1}\upgamma_{2l},
		\end{align*}
		and so all the arrows in arm 2 are determined by  $(u_{2i}, d_{2i}).$ In a similar way, the relations $(3)$ become
		\begin{align*}
		u_{31}  &=    u_{3j}d_{3j} + \displaystyle\sum_{l=1}^{j-1}\upgamma_{3l}\\
		&\vdots\\
		u_{3j-1}  &=  u_{3j}d_{3j} +  \upgamma_{3j-1}\\
		d_{3j+1} &=  u_{3j}d_{3j} -  \upgamma_{3j}\\
		d_{3j+2} &=  u_{3j}d_{3j} -  \upgamma_{3j}-  \upgamma_{3j+1}\\
		&\vdots\\
		d_{3p_3}  &= u_{3j}d_{3j} -  \displaystyle\sum_{l=j}^{p_3-1}\upgamma_{3l},
		\end{align*}
		and all the arrows in arm 3 are determined by  $(u_{3j}, d_{3j}).$
		\noindent
		The remaining relations
		\begin{align*} 
		d_{21}u_{21} - d_{11}u_{11}& =  a\\
		d_{21}u_{21} - d_{31}u_{31}& =  b\\
		u_{1p_1}d_{1p_1} - u_{2p_2}d_{2p_2}& =  A\\
		u_{3p_3}d_{3p_3} - u_{2p_2}d_{2p_2}& = B\\
		d_{11} \hdots d_{1p_1} - d_{21} \hdots d_{2p_2}& + d_{31} \hdots d_{3p_3}= 0,
		\end{align*}
		then become
		\begin{align*} 
		u_{2i}d_{2i} + \displaystyle\sum_{l=1}^{i-1} \upgamma_{2l} - u_{11}& =  a \hspace{3cm}   \hfill{(a)}\\
		(u_{2i}d_{2i} + \displaystyle\sum_{l=1}^{i-1} \upgamma_{2l}) - (u_{3j}d_{3j} + \displaystyle\sum_{l=1}^{j-1} \upgamma_{3l})& =  b\hspace{3.05cm}   \hfill{(b)}\\
		(u_{11} -  \displaystyle\sum_{l=1}^{p_1-1}\upgamma_{1l}) - (u_{2i}d_{2i} - \displaystyle\sum_{l=i}^{p_2-1} \upgamma_{2l})& =  A \hspace{3cm}   \hfill{(c)}\\
		(u_{3j}d_{3j} - \displaystyle\sum_{l=j}^{p_3-1} \upgamma_{3l}) - (u_{2i}d_{2i} - \displaystyle\sum_{l=i}^{p_2-1} \upgamma_{2l})& = B \hspace{3cm}   \hfill{(d)}
		\end{align*} 
		\[1 -  d_{2i}(d_{2i}u_{2i} - \upgamma_{2i}) \hdots (d_{2i}u_{2i} - \displaystyle\sum_{l=i}^{p_2-1}\upgamma_{2l}) + d_{3j}(d_{3j}u_{3j} - \upgamma_{3j}) \hdots (d_{3j}u_{3j} - \displaystyle\sum_{l=j}^{p_3-1}\upgamma_{3l})= 0. \hfill{(x)} \]
		Since $\boldsymbol{\upgamma} \in \Delta,$ $(a) + (c) = 0$ and $(b) + (d) = 0.$ Thus $(a)$ and $(b)$ are satisfied automatically from $(c)$ and $(d).$ By Notation \ref{Notationrel} $(c)-(d) = 0,$ so from $(b)$ and $(x)$ all relations are satisfied.
		
		\smallskip
		\noindent
	 The proof of $\mathbb{C}[U^2_{i,j}]$ and $\mathbb{C}[U^3_{i,j}]$ are similar to that of $\mathbb{C}[U^1_{i,j}]$ above.
	\end{proof}
	
	The following two results will be used in showing that $\scrM_{\upvartheta_0}(\Lambda_{\boldsymbol{\upgamma}},~\updelta)$ is smooth.
	\begin{lemma}\label{Jacobianpr}
		In $\mathbb{C}[x, y],$ for any $f = x (xy - \upalpha_{1}) \hdots (xy - \upalpha_{n})$ with $\upalpha_{i} \in \mathbb{C},$ 
		\[x \frac{\partial f}{\partial x} - y \frac{\partial f}{\partial y} = f(x, y).\]
	\end{lemma}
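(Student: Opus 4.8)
The plan is to prove the identity $x\,\partial_x f - y\,\partial_y f = f$ directly, exploiting the fact that $f$ is a product of a monomial with factors each of which is a polynomial in the single combined variable $t\colonequals xy$. First I would write $f = x\cdot g(t)$ where $g(t) = \prod_{k=1}^{n}(t-\upalpha_k)$ and $t = xy$, so that $f$ is "quasi-homogeneous" in a suitable sense: every monomial $x^a y^b$ appearing in $f$ has $a - b = 1$. This is the conceptual heart of the matter, since the operator $x\partial_x - y\partial_y$ acts on a monomial $x^a y^b$ by the scalar $a-b$, so it acts as the identity on any polynomial all of whose monomials satisfy $a-b=1$.

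The cleanest way to carry this out is the chain rule computation. Writing $f = x\, g(xy)$, I compute
\[
\frac{\partial f}{\partial x} = g(xy) + x\, g'(xy)\cdot y = g(xy) + xy\, g'(xy),
\qquad
\frac{\partial f}{\partial y} = x\, g'(xy)\cdot x = x^2 g'(xy).
\]
Hence
\[
x\frac{\partial f}{\partial x} - y\frac{\partial f}{\partial y}
= x\,g(xy) + x^2 y\, g'(xy) - x^2 y\, g'(xy)
= x\, g(xy) = f(x,y),
\]
as claimed. This is essentially a two-line verification once the substitution $t=xy$ is made; there is no serious obstacle, only the bookkeeping of the chain rule.

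Alternatively, and perhaps more in keeping with the combinatorial flavour of the rest of the paper, one could expand $f$ as a $\mathbb{C}$-linear combination of monomials and observe that the definition $f = x(xy-\upalpha_1)\cdots(xy-\upalpha_n)$ forces every monomial to be of the form $c\, x^{m+1} y^{m}$ for some $0\le m\le n$ (each factor $(xy-\upalpha_k)$ contributes either an $xy$ or a scalar). Then $x\partial_x(x^{m+1}y^m) = (m+1)x^{m+1}y^m$ and $y\partial_y(x^{m+1}y^m) = m\, x^{m+1}y^m$, so the difference is $x^{m+1}y^m$ itself; summing over the monomials with their coefficients recovers $f$. I expect the first (chain rule) argument is the one to present, as it is shortest and requires no case analysis; the only thing to be careful about is stating clearly that $g$ depends only on the product $xy$, which is exactly what makes the cross terms in $\partial_x f$ and $\partial_y f$ cancel.
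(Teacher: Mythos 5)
Your proof is correct and follows essentially the same route as the paper's: both write $f = x\,g$ with $g$ a product of the factors $(xy-\upalpha_k)$ and reduce to the observation that $x\,\partial_x g = y\,\partial_y g$ because $g$ depends only on the product $xy$ (you phrase this via the chain rule in $t=xy$, the paper via the product rule term by term, but the computation is the same). No issues.
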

	
	\begin{proof}
		Set $h_i = (xy - \upalpha_{i})$ for $i=1, 2, \hdots, n,$ and
		\[g \coloneqq (xy - \upalpha_{1}) \hdots (xy - \upalpha_{n}) = h_1 \hdots h_n,\] so that $f = xg.$ Using the product rule, 
		
		\begin{align*}
		\frac{\partial f}{\partial x} =  g + x \cdot \frac{\partial g}{\partial x} & \quad \text{and} \quad \frac{\partial f}{\partial y} =  x \cdot \frac{\partial g}{\partial y}.
		\end{align*}
		Thus the statement follows if $x^2 \cdot \frac{\partial g}{\partial x} -   xy \cdot \frac{\partial g}{\partial y} =0,$ which holds provided that
		$x \cdot \frac{\partial g}{\partial x} -   y \cdot \frac{\partial g}{\partial y} =0.$ Now, again by the product rule
		
		\begin{align*}
		x \cdot\frac{\partial g}{\partial x} &= x \cdot(y h_2 \hdots h_n + h_1 y h_3 \hdots h_n + \hdots + h_1 \hdots h_{n-1} y)\\
		y \cdot\frac{\partial g}{\partial y} &= y \cdot(x h_2 \hdots h_n + h_1 x h_3 \hdots h_n + \hdots + h_1 \hdots h_{n-1} x)
		\end{align*}
		which clearly satisfies $x \cdot \frac{\partial g}{\partial x} -   y \cdot \frac{\partial g}{\partial y} =0.$
	\end{proof}

	\begin{lemma}\label{f2_not_aunit}
		For any scalars $\upalpha_1, \upalpha_2 \hdots, \upalpha_n, \upbeta_2, \hdots, \upbeta_m,$ set $f_1 \coloneqq 
		ab - xy + \upalpha_1$ and $f_2 = 1 - a(ab - \upalpha_2) \hdots (ab - \upalpha_n)  + x(xy - \upbeta_2) \hdots (xy - \upbeta_m).$ Then $\mathbb{C}[a, b, x, y]/(f_1,~f_2) \neq 0.$ 	
	\end{lemma}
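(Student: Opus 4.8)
The plan is to reduce the statement to a non-emptiness assertion for an affine variety and then write down an explicit point. By Hilbert's Nullstellensatz, $\mathbb{C}[a,b,x,y]/(f_1,f_2)\neq 0$ if and only if the common zero locus $V(f_1,f_2)\subseteq\mathbb{C}^4$ of $f_1$ and $f_2$ is non-empty; equivalently, $f_2$ is not a unit in $\mathbb{C}[a,b,x,y]/(f_1)$. So it suffices to exhibit a single point $(a_0,b_0,x_0,y_0)\in\mathbb{C}^4$ at which both $f_1$ and $f_2$ vanish.

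To organise the computation I would introduce the monic one-variable polynomials $P(t)\coloneqq(t-\upalpha_2)\cdots(t-\upalpha_n)$ and $Q(t)\coloneqq(t-\upbeta_2)\cdots(t-\upbeta_m)$, with the convention that an empty product equals $1$, so that $f_1 = ab - xy + \upalpha_1$ and $f_2 = 1 - a\,P(ab) + x\,Q(xy)$. The key idea is to search for a solution on the slice $x=1$: setting $x_0=1$, $y_0 = s+\upalpha_1$ and $b_0 = s/a_0$ for a parameter $s\in\mathbb{C}$ and a nonzero scalar $a_0$ forces $a_0b_0 = s$ and $x_0y_0 = s+\upalpha_1$, so that $f_1$ vanishes automatically, while $f_2$ evaluates to $1 - a_0\,P(s) + Q(s+\upalpha_1)$. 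Choosing $a_0\coloneqq\bigl(1+Q(s+\upalpha_1)\bigr)/P(s)$ then makes $f_2$ vanish as well.

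What remains is to check that $s$ can be picked so that this recipe is legitimate, i.e.\ so that $P(s)\neq 0$ (so $a_0$ is defined) and $1+Q(s+\upalpha_1)\neq 0$ (so $a_0\neq 0$, which is exactly what makes $b_0 = s/a_0$ satisfy $a_0b_0 = s$). For this one observes that both $P(s)$ and $1+Q(s+\upalpha_1)$ are nonzero polynomials in $s$: $P$ is monic, and $1+Q(s+\upalpha_1)$ is monic of degree $m-1$ when $m\geq 2$ and equals the constant $2$ when $m\leq 1$; in every case it is not the zero polynomial. Hence each has only finitely many roots, and since $\mathbb{C}$ is infinite there is $s_0\in\mathbb{C}$ lying in neither root set. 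With this $s_0$, the point $(a_0,b_0,x_0,y_0) = \bigl(\tfrac{1+Q(s_0+\upalpha_1)}{P(s_0)},\ \tfrac{s_0P(s_0)}{1+Q(s_0+\upalpha_1)},\ 1,\ s_0+\upalpha_1\bigr)$ lies in $V(f_1,f_2)$, which yields the claim.

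There is no real obstacle here: once one spots the slice $x=1$, the verification is a short explicit substitution. The only point deserving a moment's care is ensuring $a_0\neq 0$, which is precisely what the condition $1+Q(s_0+\upalpha_1)\neq 0$ secures, together with the observation that the two polynomial conditions on $s$ are simultaneously satisfiable because $\mathbb{C}$ is an infinite field.
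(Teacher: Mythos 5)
Your argument is correct, and it takes a genuinely (if mildly) different route from the paper's. The paper quotients $\mathcal{A}=\mathbb{C}[a,b,x,y]/(f_1,f_2)$ by $(a-1)$: on the slice $a=1$ the relation $f_1$ eliminates $b$ via $b=xy-\upalpha_1$, and $f_2$ collapses to the single polynomial $1-(xy-(\upalpha_1+\upalpha_2))\cdots(xy-(\upalpha_1+\upalpha_n))+x(xy-\upbeta_2)\cdots(xy-\upbeta_m)$ in $\mathbb{C}[x,y]$, which is nonzero and nonconstant because its two candidate top monomials $x^{n-1}y^{n-1}$ and $x^{m}y^{m-1}$ are distinct and hence cannot cancel; therefore $\mathcal{A}/(a-1)\neq 0$. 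You slice along $x=1$ instead and push one step further, producing an explicit common zero of $f_1$ and $f_2$ by solving for $a$ and choosing the parameter $s_0$ generically so that $P(s_0)\neq 0$ and $1+Q(s_0+\upalpha_1)\neq 0$, which is possible since both are nonzero polynomials in $s$ and $\mathbb{C}$ is infinite. The two proofs are of comparable length; yours replaces the leading-monomial analysis by a genericity argument and has the small advantage of exhibiting a concrete witness point, so one never needs the (easy but implicit) observation that a nonconstant polynomial does not generate the unit ideal of $\mathbb{C}[x,y]$. Both arguments handle the empty-product edge cases $n=1$ and $m=1$ correctly.
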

	
	\begin{proof}
		Set $\mathcal{A}= \mathbb{C}[a, b, x, y]/(f_1,~f_2),$ then we claim that $\mathcal{A}/(a-1) \neq 0.$ Indeed,
		
		\begin{align*}
		\frac{\mathcal{A}}{(a-1)} &= \frac{\mathbb{C}[b, x, y]}{\left(\begin{array}{c}
			b = xy - \upalpha_1,\\
			1 - (b - \upalpha_2) \hdots (b - \upalpha_n)  + x(xy - \upbeta_2) \hdots (xy - \upbeta_m)		
			\end{array}\right)}\\
		&= \frac{\mathbb{C}[x, y]}{
			(1 - (xy - (\upalpha_1 + \upalpha_2)) \hdots (xy - (\upalpha_1 +\upalpha_n))  + x(xy - \upbeta_2) \hdots (xy - \upbeta_m))		
		},
		\end{align*}
		which is nonzero since the highest degree term is either $x^{n-1}y^{n-1}$ or $x^{m}y^{m-1},$ and these can never cancel. 
	\end{proof}
	
	The following is the main result of this subsection.
	\begin{cor}\label{ModuliDeformedReconSmooth}
		$\scrM_{\upvartheta_0}(\Lambda_{\boldsymbol{\upgamma}},~\updelta)$  is two-dimensional and smooth. 
	\end{cor}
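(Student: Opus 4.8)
The plan is to argue exactly as in the proof of Corollary \ref{ModuliSmooth}: by Proposition \ref{modulicoverDeformed}, the moduli space $\scrM_{\upvartheta_0}(\Lambda_{\boldsymbol{\upgamma}},\updelta)$ is covered by the affine opens $U^1_{i,j}, U^2_{i,j}, U^3_{i,j}$, so it suffices to treat each chart separately, and by symmetry I only need to handle $\mathbb{C}[U^1_{i,j}]$. Each such chart is presented in Proposition \ref{modulicoverDeformed} as a quotient of a four-variable polynomial ring $\mathbb{C}[d_{2i},u_{2i},d_{3j},u_{3j}]$ by two equations $f_1, f_2$, where (after relabelling $a=d_{2i}$, $b=u_{2i}$, $x=d_{3j}$, $y=u_{3j}$, and absorbing the constants into the $\upalpha$'s and $\upbeta$'s as in Lemma \ref{f2_not_aunit}) $f_1 = ab - xy + \upalpha_1$ and $f_2 = 1 - a(ab-\upalpha_2)\cdots(ab-\upalpha_n) + x(xy-\upbeta_2)\cdots(xy-\upbeta_m)$.

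The first step is dimension. Lemma \ref{f2_not_aunit} already shows each chart is nonzero, and since it is cut out by two equations in $\mathbb{A}^4$, every component has dimension at least $2$; one then checks that no component can have dimension $3$ or $4$ — e.g. $f_1$ is visibly irreducible and not a zero-divisor, so $\mathbb{C}[a,b,x,y]/(f_1)$ is a $3$-dimensional domain, and $f_2$ restricted to this domain is a nonzerodivisor (its image is nonzero by Lemma \ref{f2_not_aunit}, and the domain is a UFD-like hypersurface so principal ideals have height one), giving dimension exactly $2$. The second, and main, step is smoothness: I must show the Jacobian ideal $\bigl(f_1,f_2,\;\text{all }2\times 2\text{ minors of }\tfrac{\partial(f_1,f_2)}{\partial(a,b,x,y)}\bigr)$ is the unit ideal. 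Here is where Lemma \ref{Jacobianpr} does the work. Writing $g(x,y) = (xy-\upbeta_2)\cdots(xy-\upbeta_m)$ and $\tilde g(a,b) = (ab-\upalpha_2)\cdots(ab-\upalpha_n)$, Lemma \ref{Jacobianpr} gives the Euler-type identities $x\partial_x(xg) - y\partial_y(xg) = xg$ and $a\partial_a(a\tilde g) - b\partial_b(a\tilde g) = a\tilde g$. Taking the combination $a\cdot\partial_a f_2 - b\cdot\partial_b f_2 - x\cdot\partial_x f_2 + y\cdot\partial_y f_2$ and applying both identities (with appropriate signs) collapses it to $-a\tilde g + xg = f_2 - 1$, i.e. $1 = f_2 - \bigl(a\partial_a f_2 - b\partial_b f_2 - x\partial_x f_2 + y\partial_y f_2\bigr)$. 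Since the right-hand side is a $\mathbb{C}[a,b,x,y]$-linear combination of $f_2$ and the partials of $f_2$, which in turn lie in the Jacobian ideal of $(f_1,f_2)$ — the partials of $f_2$ are columns of the Jacobian matrix, and any single partial together with a suitable partial of $f_1$ forms the relevant minors — it follows that $1$ lies in the Jacobian ideal. (More carefully: one shows $1$ lies in the ideal generated by $f_1, f_2$ and the four partials of $f_2$; since at any point of the variety where the $2\times 2$ minors all vanish the gradient of $f_2$ is a scalar multiple of the gradient of $f_1$, one uses the relation $\partial_a f_1 = b$, $\partial_b f_1 = a$, $\partial_x f_1 = -y$, $\partial_y f_1 = -x$ to rewrite the offending combination, concluding smoothness by \cite[Algorithm 5.7.6]{Greuel} exactly as in Corollary \ref{ModuliSmooth}.)

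I expect the main obstacle to be bookkeeping the signs and the constant shifts in the Euler identity: the two "halves" of $f_2$ carry opposite signs, and the inner arguments $ab-\upalpha_k$ versus $xy-\upbeta_k$ must be matched to the $x,y$ and $a,b$ variables in the right order so that Lemma \ref{Jacobianpr} applies verbatim to each half. Once the identity $1 = f_2 - (a\partial_a - b\partial_b - x\partial_x + y\partial_y)f_2$ is in hand, concluding that the chart is smooth is immediate, and since $\scrM_{\upvartheta_0}(\Lambda_{\boldsymbol{\upgamma}},\updelta)$ is covered by such charts, it is smooth; combined with the dimension count it is two-dimensional and smooth, as claimed. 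The cases $U^2_{i,j}$ and $U^3_{i,j}$ are handled identically, since their defining equations have the same shape.
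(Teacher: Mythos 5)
Your plan follows the paper's proof essentially verbatim: the same chart-by-chart reduction via Proposition \ref{modulicoverDeformed}, the same dimension count using that $f_1$ and then $f_2$ each cut the dimension down by one (with Lemma \ref{f2_not_aunit} ruling out the empty chart), and the same use of the Euler identity of Lemma \ref{Jacobianpr} combined with $\partial_{u} f_1 = d$, $\partial_{d} f_1 = u$ to recognise $f_{21}$ and $f_{22}$ as $2\times 2$ minors of the Jacobian, whence $1 = f_2 + f_{21} - f_{22}$ lies in the Jacobian ideal. The only differences are cosmetic (you package the two halves into a single Euler operator, and the acknowledged sign bookkeeping), so the proposal is correct and matches the paper's argument.
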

	
	\begin{proof}
		By Proposition \ref{modulicoverDeformed}, $\Rep(\Lambda_{\boldsymbol{\upgamma}},~\updelta)~ /\!\!\!\!/_{\upvartheta_0} \mathrm{GL}$ is covered by the open sets $U^1_{i,j}, U^2_{i,j}, U^3_{i,j},$ so it suffices to prove that each of these is two-dimensional and smooth. \\
		\emph{Case 1.} 
		Note that by  Proposition \ref{modulicoverDeformed}, $\mathbb{C}[U^1_{i,j}] \cong \mathbb{C}[d_{2i}, u_{2i}, d_{3j}, u_{3j}]/(f_1, f_2)$ where
		\begin{align*}
		f_1 & \coloneqq 
		d_{2i} u_{2i} + \displaystyle\sum_{l=1}^{i-1} \upgamma_{2l} - d_{3j} u_{3j} - \displaystyle\sum_{l=1}^{j-1} \upgamma_{3l} - b
		,\\
		f_2 & \coloneqq 1 -  f_{21} + f_{22},\\
		f_{21} & \coloneqq d_{2i}(d_{2i}u_{2i} - \upgamma_{2i}) \hdots (d_{2i}u_{2i} - \displaystyle\sum_{l=i}^{p_2-1}\upgamma_{2l}),\\
		f_{22} & \coloneqq  d_{3j}(d_{3j}u_{3j} - \upgamma_{3j}) \hdots (d_{3j}u_{3j} - \displaystyle\sum_{l=j}^{p_3-1}\upgamma_{3l}).
		\end{align*}
		
		Since $\mathbb{C}[d_{2i}, u_{2i}, d_{3j}, u_{3j}]$ is an affine domain, and $f_1$ is not a unit, then from \cite[Corollary 13.11]{Eisenbud}, $S \coloneqq \mathbb{C}[d_{2i}, u_{2i}, d_{3j}, u_{3j}]/(f_1)$ is a $3$-dimensional domain. Now $f_2$ is not a unit in $S$ since if it were a unit in $S,$ then
		\[ 0 = S/ (f_2) \cong  \mathbb{C}[d_{2i}, u_{2i}, d_{3j}, u_{3j}]/(f_1, f_2) \]
		which would contradict Lemma \ref{f2_not_aunit}. Thus since $S$ is an affine domain, again \cite[Corollary 13.11]{Eisenbud} asserts that 
		\[\text{dim}~ \mathbb{C}[U^1_{i,j}] = \text{dim}~ S - 1 = 2.\]
		Therefore, $\text{dim}~ \scrM_{\upvartheta_0}(\Lambda_{\boldsymbol{\upgamma}},~\updelta) = 2.$\\
		
		\noindent
		Now set $K = (f_1,~ f_2),$ and consider Jacobian matrix
		
		\[\mathcal{J} = \begin{pmatrix}\label{Jacobianminors}
		\frac{\partial f_1}{\partial u_{2i}}&  &\frac{\partial f_1}{\partial d_{2i}}&  &\frac{\partial f_1}{\partial u_{3j}}&  &\frac{\partial f_1}{\partial d_{3j}} \\
		&&&&&&\\
		\frac{\partial f_2}{\partial u_{2i}}&  &\frac{\partial f_2}{\partial d_{2i}}&  &\frac{\partial f_2}{\partial u_{3j}}&  &\frac{\partial f_2}{\partial d_{3j}} 
		\end{pmatrix} = \begin{pmatrix}
		d_{2i}&  &u_{2i}&  & -d_{3j}&  & -u_{3j} \\
		&&&&&&\\
		\frac{\partial f_2}{\partial u_{2i}}&  &\frac{\partial f_2}{\partial d_{2i}}&  &\frac{\partial f_2}{\partial u_{3j}}&  &\frac{\partial f_2}{\partial d_{3j}} 
		\end{pmatrix}.\] 
		Write $\mathrm {J}$ for the ideal generated by the $2 \times 2$ minors of $\mathcal{J}$, together with $K.$ Then
		\begin{align*}
		f_{21} &= d_{2i} \cdot \frac{\partial f_{21}}{\partial d_{2i}} -    u_{2i} \cdot \frac{\partial f_{21}}{\partial u_{2i}} \tag{by Lemma \ref{Jacobianpr}}\\
		&= d_{2i} \cdot \frac{\partial f_2}{\partial d_{2i}} -    u_{2i} \cdot \frac{\partial f_2}{\partial u_{2i}} \tag{by inspection}\\
		&= \frac{\partial f_1}{\partial u_{2i}} \cdot \frac{\partial f_2}{\partial d_{2i}} -    \frac{\partial f_1}{\partial d_{2i}} \cdot \frac{\partial f_2}{\partial u_{2i}},
		\end{align*}  
		and thus $f_{21} \in \mathrm {J}.$\\
		
		\noindent
		With a similar argument as above, using the last  $2 \times 2$ minor
		\[\frac{\partial f_1}{\partial u_{3j}} \cdot \frac{\partial f_2}{\partial d_{3j}} -  \frac{\partial f_1}{\partial d_{3j}} \cdot \frac{\partial f_2}{\partial u_{3j}} = \left( -d_{3j} \right) \cdot \frac{\partial f_{22}}{\partial d_{3j}} -  \left( -u_{3j} \right) \cdot \frac{\partial f_{22}}{\partial u_{3j}}, \]
		\noindent
		together with Lemma \ref{Jacobianpr},
		\[ f_{22} = d_{3j}  \cdot \frac{\partial f_{22}}{\partial d_{3j}} - u_{3j}  \cdot \frac{\partial f_{22}}{\partial u_{3j}}  \in \mathrm {J}.  \]
		\noindent
		Thus, since $f_2 = 1 - f_{21} + f_{22}$ is in $\mathrm {J},$ it follows that $1 \in \mathrm {J}.$ As is standard, see e.g \cite[Algorithm 5.7.6]{Greuel}, this implies that $\scrM_{\upvartheta_0}(\Lambda_{\boldsymbol{\upgamma}},~\updelta)$ is smooth. 
		
		\smallskip
		\noindent
		\emph{Case 2.} The proof of $\mathbb{C}[U^2_{i,j}]$ and $\mathbb{C}[U^3_{i,j}]$ are similar to that of $\mathbb{C}[U^1_{i,j}]$ in above.
	\end{proof}

	\section{Simultaneous Resolution}\label{SimResSection}
	This section considers the invariant representation variety associated to the quiver of the reconstruction algebra in \S\ref{Preliminaries}, and finds its generators in terms of cycles, the deformed reconstruction algebra introduced in \S\ref{Deformed} is used to achieve simultaneous resolution.

	\subsection{Representation Variety}
	Below, we say that arrows $p_1,\hdots,p_n$ are \textit{composable} if $h(p_i) = t(p_{i+1}) ~\text{for all}~ i=1, \hdots , n-1.$ 
	
	\begin{lemma}\label{lemcyclesgen}
		Let $Q$ be the quiver in \textnormal{\eqref{ReconAlgebraQuiver}}. With notation as in \S \textnormal{\ref{Generalities}}, $\mathcal{\scrR}^G$ is generated by $p + I$ where $p$ is a cycle in $Q$.
	\end{lemma}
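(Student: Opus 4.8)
The plan is to exploit the torus action $G = \prod_{q \in Q_0}\mathbb{C}^{\ast}$ and the fact that $\scrR$ is a quotient of a polynomial ring on the arrow variables, graded by the character lattice $\mathbb{Z}^{Q_0}$ (equivalently, by the abelianization of the path groupoid). First I would observe that the ideal $I = \langle D_1 - D_2 + D_3\rangle$ is homogeneous for this $\mathbb{Z}^{Q_0}$-grading, since $D_1, D_2, D_3$ are all paths from the extended vertex to the $0^{\text{th}}$ vertex and hence carry the same weight $e_{h}-e_{t}$; therefore $\scrR$ inherits the grading, and $\scrR^G$ is precisely the degree-zero piece $\scrR_0$. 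So the statement reduces to: every element of $\scrR$ of weight $0$ is a $\mathbb{C}$-linear combination of (images of) closed paths in $Q$, i.e.\ paths $p$ with $h(p) = t(p)$.

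Next I would prove this at the level of the ambient polynomial ring $P \colonequals \mathbb{C}[\Rep(\mathbb{C}Q,\updelta)]$. A monomial in the arrow variables is a product $a_{i_1}\cdots a_{i_r}$ of arrows (with multiplicity); its $G$-weight is $\sum_k (e_{h(a_{i_k})} - e_{t(a_{i_k})})$. The key combinatorial step is: if this weight is $0$, then the multiset of arrows $\{a_{i_1},\dots,a_{i_r}\}$ can be decomposed into closed walks, and since $P$ is commutative the monomial equals the product of the corresponding closed-path monomials; each closed walk, read as a genuine composable cycle in $Q$, gives a cycle $p$ with $p + I$ the claimed generator (reordering the arrows inside a closed walk only permutes commuting variables, so it does not change the monomial in $P$, hence not in $\scrR$ either). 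The decomposition into closed walks is the standard fact that a $\mathbb{Z}^{Q_0}$-valued $1$-chain with zero boundary is a sum of cycles: repeatedly, if the multiset is nonempty and balanced at every vertex, start at any arrow and follow arrows out of successive heads — the balancing condition guarantees you can always continue until you return to the start, peel off that closed walk, and induct on $r$. Passing to the quotient $\scrR = P/I$ is immediate since $P_0 \twoheadrightarrow \scrR_0$.

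I expect the main obstacle to be purely bookkeeping rather than conceptual: one must be careful that an arbitrary closed \emph{walk} in the underlying graph of $Q$ (using arrows only in their given orientation, since there are no formal inverses) is genuinely a composable cycle — but this is exactly what the boundary-zero condition provides, because each arrow is used in its correct direction and the walk closes up by construction. A secondary point to address is that elements of $\scrR^G$ are linear combinations of monomials of weight $0$, not single monomials; but the $G$-action is linear and diagonalizes $P$ into weight spaces, so an invariant element is automatically a sum of its weight-zero monomials, each handled as above. Thus $\scrR^G = \scrR_0$ is spanned over $\mathbb{C}$ by the classes $p + I$ of cycles $p$ in $Q$, which is the assertion; the generating set can of course be cut down to cycles that are not products of shorter cycles, but that refinement is not needed here.
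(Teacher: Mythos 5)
Your proposal is correct and follows essentially the same route as the paper: both arguments reduce invariance of a monomial to the condition that its multiset of arrows is balanced at every vertex (the paper phrases this as tracking which $\upmu_{t(p_i)}^{-1}$ cancels each $\upmu_{h(p_j)}$, you phrase it as vanishing of the $\mathbb{Z}^{Q_0}$-weight), and then peel off composable closed walks. If anything, your version is slightly more careful on two points the paper glosses over — that an invariant monomial may decompose as a \emph{product} of several disjoint cycles rather than a single one, and that homogeneity of $I$ is what lets one work with monomials in the ambient polynomial ring — but these are refinements of, not departures from, the paper's argument.
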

	\begin{proof} Choose a monomial $p = p_1 \hdots p_n \in \scrR,$ where ${p_i}$'s are arrows. We claim that
		$\upmu \cdot (p + I)  = p~+I ~ \text{for all} ~ \upmu \Leftrightarrow p$ is a cycle. First observe that $\upmu \cdot (p + I) = \upmu \cdot p + I
		= (\upmu_{t(p_1)} \hdots \upmu_{t(p_n)})^{-1} p (\upmu_{h(p_1)} \hdots \upmu_{h(p_n)}) + I$.
		
		\noindent		
		($\Leftarrow$) If $p$ is a cycle, in particular it is composable. Thus for all $\upmu \in G,$ 
		\begin{align*}
		\upmu \cdot (p + I) &= \upmu \cdot p + I\\
		&= \upmu_{t(p_1)}^{-1} p_1\upmu_{h(p_1)}\upmu_{t(p_2)}^{-1}p_2\upmu_{h(p_2)} \hdots  \upmu_{t(p_n)}^{-1}p_n \upmu_{h(p_n)}+ I\\
		&= \upmu_{t(p_1)}^{-1} \upmu_{h(p_n)} p_1 p_2  \hdots p_n + I\\
		&= \upmu_{t(p_1)}^{-1} \upmu_{h(p_n)} p + I\\
		&= p + I.  \tag{since $t(p_1) = h(p_n)$}
		\end{align*}
		Hence $ p + I\in \scrR^G$.
		
		\noindent		
		($\Rightarrow$)
		Suppose that $p + I \in \scrR^G$ such that $\upmu \cdot (p + I)  = \upmu \cdot p + I = p~+I$ for all $\upmu$. Then $\upmu_{h(p_1)}$ must cancel some $\upmu_{t(p_i)}^{-1}$ for some $i$, so $h(p_1) = t(p_i).$ Now consider $\upmu_{h(p_i)}.$ It must cancel $\upmu_{t(p_j)}^{-1}$ for some $j$, so $h(p_i) = t(p_j).$
		Continuing like this, we can assume $p = p_1p_ip_j \hdots p_m$ where $p_1p_ip_j \hdots p_m$ is composable.
		But then $\upmu \cdot (p + I) = \upmu \cdot p + I = \upmu_{t(p_1)}^{-1} \cdot p \cdot \upmu_{h(p_m)} + I$ and so since $\upmu \cdot (p + I)= p + I$, $t(p_1) = h(p_m),$ and $p$ is a cycle.
	\end{proof}
	
	\subsection{Reconstruction Algebras}\label{ReconAlg} By Lemma \ref{lemcyclesgen}, $\scrR^G$ is generated by cycles. This subsection finds a finite generating set, by considering 
	\begin{equation*} 
	\scrS_1 \coloneqq \{ 2\textnormal{-cycles}\}\bigcup 
	\left\{
	\begin{array}{cccc}
	D_1U_1, &D_1U_2,  &D_1U_3\\
	D_2U_1, &D_2U_2, &D_2U_3\\
	D_3U_1, &D_3U_2,  &D_3U_3
	\end{array}
	\right\}.
	\end{equation*}
	
	\begin{prop}\label{invariantringgens}
		$\scrR^G$ is generated as a $\mathbb{C}$--algebra by the set 	$\scrS_1.$
	\end{prop}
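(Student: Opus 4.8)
The goal is to show that the invariant ring $\scrR^G$ is generated, as a $\mathbb{C}$-algebra, by the explicit finite set $\scrS_1$ consisting of the $2$-cycles at each vertex together with the nine "long" cycles $D_aU_b$ for $a,b\in\{1,2,3\}$. By Lemma \ref{lemcyclesgen} we already know $\scrR^G$ is spanned by (images of) cycles in $Q$, so it suffices to show that every cycle $p$ in $Q$ can be written, modulo $I=\langle D_1-D_2+D_3\rangle$, as a polynomial in the elements of $\scrS_1$. First I would set up the bookkeeping: a cycle based at a vertex $v$ decomposes into "excursions" that leave $v$, travel out along some arm (using the down-arrows $d$), possibly turn around, and come back (using the up-arrows $u$); the combinatorics of the quiver $Q$ in \eqref{ReconAlgebraQuiver} — three arms attached at the bottom vertex $0$ and at the top vertex, with only $2$-cycles as extra relations in the non-deformed reconstruction algebra sense but here only the single relation $I$ — forces any such excursion to be a path that goes down an arm and up the same or an adjacent arm.

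The key structural step is a \emph{straightening} argument. Using the commutativity-type identities available in $\scrR$ (note $\scrR$ is a quotient of a polynomial ring, so once we are looking at an honest cycle the arrow-variables along it commute as scalars after we fix a representation; more precisely, the relation $u_{ki}d_{ki}=d_{ki+1}u_{ki+1}$ is \emph{not} imposed in $\scrR$, so I must be careful — the only relation is $I$), I would argue that any cycle in $Q$ is, as a path, a product of elementary sub-cycles of two types: (i) the $2$-cycles $d_{ki}u_{ki}$ and $u_{ki}d_{ki}$ sitting at interior vertices of the arms, and (ii) full traversals that go all the way down one arm and all the way up another, i.e. the paths $D_aU_b$. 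The point is that once a cycle enters arm $k$ from a branch vertex, it must either immediately return (producing a $2$-cycle factor that can be peeled off at the innermost vertex it reaches) or continue to the branch vertex at the other end of that arm, and inductively the innermost $2$-cycle can always be factored out. Concretely: take a cycle $p$; if it ever uses a consecutive pair $d_{ki}u_{ki}$ or $u_{ki}d_{ki}$ at an interior vertex, that pair is a $2$-cycle in $\scrS_1$ and the remaining path (with those two arrows deleted) is still a cycle of strictly shorter length, so we induct on length; if $p$ uses no such consecutive pair, then every time it goes down an arm it goes all the way down to vertex $0$ and every time it goes up an arm it goes all the way up to the top vertex, so $p$ is literally a product of blocks $D_aU_b$ — finishing the induction.

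The place where the relation $I$ enters, and which I expect to be the main obstacle, is ensuring the generating set is \emph{finite} and that the nine products $D_aU_b$ together with the $2$-cycles really suffice — in particular handling cycles that wind around the "hole" created by having three arms meeting at two points. A cycle could go down arm $1$, up arm $2$, down arm $2$, up arm $3$, etc., producing arbitrarily long words $D_{a_1}U_{b_1}D_{a_2}U_{b_2}\cdots$; these are genuinely products of elements of $\scrS_1$, so this causes no problem for \emph{generation}, but one must check there is no subtlety from the relation $I$ forcing extra generators or, conversely, that $I$ is exactly what allows us not to need e.g. separate generators $D_1D_2^{-1}$ type expressions (there are no inverses, so in fact $I$ is only used implicitly via Lemma \ref{lemcyclesgen} and the later computation of relations). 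So the real content is the straightening lemma of the previous paragraph, which is purely a statement about paths in the quiver $Q$. I would therefore structure the proof as: (1) invoke Lemma \ref{lemcyclesgen} to reduce to cycles; (2) prove by induction on path-length that every cycle lies in the subalgebra generated by $\scrS_1$, with the base case being the trivial cycles $e_v$ and the inductive step peeling off either an innermost $2$-cycle or, if none exists, recognizing the cycle as a concatenation of blocks $D_aU_b$; (3) conclude. The subtle point to get right in writing it up is the precise claim that "no consecutive cancelling pair at an interior vertex" implies "every descent/ascent is a full arm traversal," which uses that interior arm vertices have exactly one incoming and one outgoing $d$ and one incoming and one outgoing $u$, so a path passing through such a vertex and not turning around must continue in the same direction.
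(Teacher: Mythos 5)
Your argument is correct and is essentially the paper's: both reduce to cycles via Lemma \ref{lemcyclesgen} and then induct on length, using commutativity of $\scrR$ to factor out any $2$-cycle $d_{ki}u_{ki}$ or $u_{ki}d_{ki}$ occurring at a turning point, so that the residual cycle alternates full descents $D_a$ with full ascents $U_b$ and is therefore a product of the generators $D_aU_b$ (up to cyclic rotation of the base point, which is harmless since one is working with monomials in a commutative ring). The only cosmetic difference is that the paper traces the cycle outward from its base vertex and case-splits on whether it reaches the bottom vertex, whereas you peel off an arbitrary innermost $2$-cycle first; both hinge on the same observation that a cycle with no turn-around at an interior arm vertex must traverse each arm in full.
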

	
	\begin{proof}
		For any vertex $v$, consider a non trivial cycle $p$, then it must leave the vertex. According to the quiver, there are two options:
		
		\smallskip
		\noindent
		\emph{Case 1.} The path $p$ starts with a downward arrow ($p = d_{ij}p^{\prime})$ from vertex $v$ along the $i^{th}$ arm. Since $p$ is a cycle, $p^{\prime} \colon h(d_{ij}) \rightarrow v.$  If $p^{\prime}$ continues downwards, at some stage $p^{\prime}$ stops travelling downwards, and we can write $p = d_{ij}d_{ij+1}\hdots d_{ik} p^\prime$
		for some $p^{\prime}\colon a \rightarrow v.$ If $a \neq b,$ where $b$ is the bottom vertex, then $p^\prime$ starts upwards, so 
		\[p = d_{ij} \hdots \underbrace{(d_{ik}u_{ik})}_{z}p^{\prime\prime} \sim z \text{(cycles of length smaller than p)}\] thus by induction, $p \in \left<\scrS_1\right>$. Hence we can assume $a = b,$ and $p^{\prime}$ must travel up one of the arms. According to the quiver $Q$ in \S\ref{Preliminaries}, there are $3$ options.
		
		\begin{enumerate}
			\item [(a)] $p^{\prime}$ travels up the $i^{th}$ arm. Again $p = d_{ij} \hdots (d_{ip_i}u_{ip_i})p^{\prime\prime},$ by induction we are done. 
			
			\item [(b)] $p^{\prime}$ travels up one of the other $2$ arms. If it doubles back on itself before reaching the top ($0^{th}$ vertex), we are done by induction since $\scrS_1$ contains all $2$-cycles. Hence we can assume $p^\prime$ reaches the top, so $p= d_{ij} \hdots d_{ip_i}U_lp^{\prime\prime}$ for some $l \neq i.$ Repeating, either $p^{\prime\prime}$ travels down the $i^{th}$ arm without doubling back, in which case $p \sim D_iU_l p^{\prime\prime\prime},$ or $p^{\prime\prime}$ travels down the $k^{th}$ arm $(k \neq i)$ without doubling back in which case $p \sim(d_{ij} \hdots d_{ip_i}) (U_lD_k) p^{\prime\prime}.$ In either case, by induction we are done.
		\end{enumerate}

		\smallskip
		\noindent
		\emph{Case 2.}
		The path $p$ starts with an upward arrow. This is very similar to Case 1,
		after interchanging the downward and the upward arrows. 
	\end{proof}

	Now, set
	\begin{align*}
	\scrS_2 & \coloneqq
	\{ 2\textnormal{-cycles}\}\bigcup 
	\left\{
	\begin{array}{ccccc}
	&D_1U_2, &D_1U_3\\
	D_2U_1, &&D_2U_3\\
	\end{array}
	\right\},
	\\
	\scrS_3 & \coloneqq \{ 2\textnormal{-cycles}\}\bigcup\,\, \{D_1U_2,~D_2U_1,~D_2U_3\}.
	\end{align*}

	\begin{lemma}\label{lemmainvargens}
		$\left<\scrS_3\right> = \left<\scrS_2\right> = \left<\scrS_1\right>.$
	\end{lemma}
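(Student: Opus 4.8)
The plan is to prove the two equalities $\left<\scrS_3\right> = \left<\scrS_2\right>$ and $\left<\scrS_2\right> = \left<\scrS_1\right>$ separately, in each case showing that the generators of the larger set that are missing from the smaller set can be rewritten as polynomial combinations of the elements of the smaller set, working modulo the ideal $I = \left<D_1 - D_2 + D_3\right>$. The containments $\left<\scrS_3\right> \subseteq \left<\scrS_2\right> \subseteq \left<\scrS_1\right>$ are immediate from the definitions, so the content is the reverse containments.

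For $\left<\scrS_2\right> = \left<\scrS_1\right>$, the elements of $\scrS_1 \setminus \scrS_2$ are $D_1U_1$, $D_2U_2$, $D_3U_3$ and $D_3U_1$, $D_3U_2$. First I would handle the ``diagonal'' cycles $D_iU_i$: the key observation is that $D_iU_i$ is a product of the $2$-cycles along arm $i$ (e.g.\ $D_1U_1 = d_{11}\hdots d_{1p_1}u_{1p_1}\hdots u_{11}$, and inserting the reconstruction relations $u_{1k}d_{1k} = d_{1k+1}u_{1k+1}$ repeatedly, or more simply noting that $D_iU_i$ already visits $2$-cycles as subpaths), so $D_iU_i$ lies in the subalgebra generated by the $2$-cycles; this uses the relations of the reconstruction algebra, which hold in $\scrR$ after we also note that $\scrR$ is only a quotient by the canonical relation — so I should be careful and instead use that $D_iU_i$ is literally a monomial whose value, using only the commutativity of the $1$-dimensional representation (the invariants $\scrR^G$ are computed inside a \emph{commutative} ring), factors as a product of $2$-cycles $(d_{ik}u_{ik})$. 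Next, for $D_3U_1$ and $D_3U_2$: apply the canonical relation $D_3 = D_2 - D_1$ inside $\scrR$, so $D_3U_1 = D_2U_1 - D_1U_1$ and $D_3U_2 = D_2U_2 - D_1U_2$, each of which is now a combination of elements already shown to be in $\left<\scrS_2\right>$ (using $D_2U_1, D_1U_2 \in \scrS_2$ and $D_1U_1, D_2U_2 \in \left<\scrS_2\right>$ from the previous step).

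For $\left<\scrS_3\right> = \left<\scrS_2\right>$, the missing generators are $D_1U_3$ and $D_2U_3$. Here the idea is that $U_3$ can be expressed via the other up-paths using the relations: multiplying a suitable $2$-cycle identity, one gets a relation of the form $D_3 U_3 = (\text{$2$-cycle})^{\,p_3}$-type expression, but more usefully one exploits that $D_1 U_3$ and $D_2 U_3$ can be linked to $D_1 U_2$, $D_2 U_1$ through the ``top'' relations $u_{1p_1}d_{1p_1} = u_{2p_2}d_{2p_2} = u_{3p_3}d_{3p_3}$ and the ``bottom'' relations $d_{11}u_{11} = d_{21}u_{21} = d_{31}u_{31}$. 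Concretely, I expect an identity such as $(D_2 U_1)(D_1 U_3) = (D_1 U_1)(D_2 U_3)$ or $(D_2 U_3)(D_1 U_2) = (D_2 U_1)(D_1 U_3) \cdot(\text{something})$ coming from rearranging composable cycles and collapsing $2$-cycles; combined with $D_3 = D_2 - D_1$ to trade $U_3$-cycles against $U_1,U_2$-cycles. The main obstacle is precisely this last step: unlike the straightforward substitution in the first equality, eliminating $D_1U_3$ and $D_2U_3$ requires finding the right product relations among the nine ``long'' cycles and verifying they follow from the $2$-cycle and canonical relations; I would organize this by writing each of $D_1U_3, D_2U_3$ as a monomial in arrow variables, using the reconstruction relations to slide the arrows so that a copy of $D_3$ (hence $D_2 - D_1$) or a common $2$-cycle factor appears, and then reading off membership in $\left<\scrS_3\right>$. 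Once both equalities are established, the lemma follows.
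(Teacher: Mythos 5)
Your first equality, $\left<\scrS_1\right>=\left<\scrS_2\right>$, is handled correctly and in essentially the same way as the paper: the diagonal cycles $D_iU_i$ are literally products of the $2$-cycles $d_{ik}u_{ik}$ in the commutative ring $\scrR$ (your self-correction here is exactly the right instinct, since $\scrR$ is the quotient of a polynomial ring by the canonical relation \emph{only}), and $D_3U_1$, $D_3U_2$ then follow from $D_3=D_2-D_1$.

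The second equality contains a genuine gap, and it originates in a misreading of $\scrS_3$. By definition $\scrS_3$ already contains $D_2U_3$, so the only generator of $\scrS_2$ absent from $\scrS_3$ is $D_1U_3$, and the argument is the same one-liner as before: $D_1U_3 = D_2U_3 - D_3U_3$, with $D_2U_3\in\scrS_3$ and $D_3U_3=\prod_k (d_{3k}u_{3k})$ a product of $2$-cycles. Because you believe $D_2U_3$ must also be produced, you are led to try to eliminate every $U_3$-factor, and the sketch you give for this cannot be completed as written, for two reasons. First, the ``top'' and ``bottom'' relations you invoke ($u_{1p_1}d_{1p_1}=u_{2p_2}d_{2p_2}=u_{3p_3}d_{3p_3}$, $d_{11}u_{11}=d_{21}u_{21}=d_{31}u_{31}$) are relations of the reconstruction algebra but do \emph{not} hold in $\scrR$, which is cut out only by $D_1-D_2+D_3$; you flagged precisely this pitfall in the first half and then fall back into it here. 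Second, even granting a product identity such as $(D_2U_1)(D_1U_3)=(D_1U_1)(D_2U_3)$, a relation among products does not show that the individual factor $D_1U_3$ lies in the subalgebra $\left<\scrS_3\right>$; subalgebra membership is not detected by such identities. The fix is simply to use the correct definition of $\scrS_3$ and apply the canonical relation once more, exactly as in the first half.
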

	
	\begin{proof}
		Since $\scrS_3 \subseteq \scrS_2 \subseteq \scrS_1,$ it suffices to prove that $\left<\scrS_1\right> \subseteq \left<\scrS_2\right> \subseteq \left<\scrS_3\right>.$
		For $\left<\scrS_1\right> \subseteq \left<\scrS_2\right>,$ multiplying the relation $D_3 =  D_2 - D_1$ with $U_1, U_2 , U_3,$ shows that all elements in $\scrS_1$ can be generated by the elements in $\scrS_2.$ 
		
		\bigskip
		\noindent
		For $\left<\scrS_2\right> \subseteq \left<\scrS_3\right>,$ again $D_3U_i =  D_2 U_i - D_1 U_i$ for $i = 1,~2,~3$ and so $D_1 U_i =  D_2 U_i - D_3U_i$ where  $D_iU_i$ is a product of $2$-cycles. Therefore,  $D_3 U_1,~D_3 U_2,~D_1 U_3$ can be expressed in terms of elements of $\scrS_3.$ 
		
	\end{proof}

	\subsection{Simultaneous Resolution}\label{SimRes} Recall from 
	(\ref{eqnRep}) that $\mathcal{\scrR} = \mathbb{C}[\Rep(\mathbb{C}Q,\updelta)]/I,$ and $G = \mathrm{GL}$ acts on $\mathcal{\scrR},$ so we can form $\mathcal{\scrR}^G.$  By \S\ref{ReconAlg}, $\mathcal{\scrR}^G$ is generated by
	\begin{align*}
	\mathsf{w}_1 &\coloneqq D_1U_2 + I\\
	\mathsf{w}_2 &\coloneqq ~~D_2U_1 + I\\
	\mathsf{w}_3 &\coloneqq -D_2U_3 + I\\				
	\mathsf{v}_{i,j} &\coloneqq ~~d_{ij}u_{ij} + I.
	\end{align*}
	for $i=1, 2, 3 ~\text{and}~ 1 <j \leq p_i.$	\\
	
	\noindent 
	Write $(\upbeta_{1}, \upbeta_{2}, \upbeta_{3}, \upalpha_{1,1},\hdots, \upalpha_{1,p_1}, \upalpha_{2,1},\hdots, \upalpha_{2,p_2}, \upalpha_{3,1},\hdots,$ $\upalpha_{3,p_3})$ for the point in Spec $\mathcal{\scrR}^G$ corresponding to the maximal ideal $(\mathsf{w}_1 -\upbeta_{1}, \mathsf{w}_2 -\upbeta_{2}, \mathsf{w}_3 -\upbeta_{3},  \mathsf{v}_{1,1}-\upalpha_{1,1},\hdots,\mathsf{v}_{3,p_3}-\upalpha_{3,p_3}).$ Let $Q$ be the quiver of the reconstruction algebra, and consider the map
	\[ \uppi \colon \text{Spec}~ \mathcal{\scrR}^G =  \Rep(\mathbb{C}Q/I,~\updelta)~ /\!\!\!\!/~ \mathrm{GL}  \rightarrow \Delta,\]
	defined by taking 
	\[
	\begin{tikzpicture}
	\node (A) at (0,0) {$(\upbeta_{1}, \upbeta_{2}, \upbeta_{3}, \upalpha_{1,1},\hdots, \upalpha_{1,p_1}, \upalpha_{2,1},\hdots, \upalpha_{2,p_2}, \upalpha_{3,1},\hdots, \upalpha_{3,p_3})$};
	\scriptsize
	\node (a) at (0,-2) {$((\upalpha_{1,i} - \upalpha_{1,i+1})^{p_1-1}_{i=1},(\upalpha_{2,i} - \upalpha_{2,i+1})^{p_2-1}_{i=1},(\upalpha_{3,i} - \upalpha_{3,i+1})^{p_3-1}_{i=1}, \upalpha_{2,1} - \upalpha_{1,1}, \upalpha_{2,1} - \upalpha_{3,1}, \upalpha_{1,p_1} - \upalpha_{2,p_2},  \upalpha_{3,p_3} - \upalpha_{2,p_2}) .$};
	\draw[|->] (A)--(a);
	\end{tikzpicture}
	\]
	
	\begin{remark}\label{fibreabovegamma}
		The fibre above a point $\boldsymbol{\upgamma} \in \Delta$ is  precisely $\Rep(\Lambda_{\boldsymbol{\upgamma}},~\updelta) /\!\!/ \mathrm{GL}.$ Indeed, the fibre above $\boldsymbol{\upgamma} \in \Delta$ is the zero locus of 
		\[
		\begin{array}{ccccc}
		(1)&~\mathsf{v}_{1,i} - \mathsf{v}_{1,i+1} &=  &\upgamma_{1,i} &~ \text{for all}~ 1 \leq i \leq p_1-1\\
		(2)&~\mathsf{v}_{2,i} - \mathsf{v}_{2,i+1} &=  &\upgamma_{2,i} &~ \text{for all}~ 1 \leq i \leq p_2-1\\
		(3)&~\mathsf{v}_{3,i} - \mathsf{v}_{3,i+1} &=  &\upgamma_{3,i}&~ \text{for all}~ 1 \leq i \leq p_3-1\\
		(a)&~\mathsf{v}_{2,1} - \mathsf{v}_{1,1} &=  &a&\\
		(b)&~\mathsf{v}_{2,1} - \mathsf{v}_{3,1} &=  &b&\\
		(c)&~\mathsf{v}_{1,p_1} - \mathsf{v}_{2,p_2} &=  &A&\\
		(d)&~\mathsf{v}_{3,p_3} - \mathsf{v}_{2,p_2} &= &B&
		\end{array}\]  
		By Definition \ref{DeformedReconAlgbera}, this equals $\Rep(\Lambda_{\boldsymbol{\upgamma}},~\updelta) /\!\!/ \mathrm{GL}.$ In particular, the fibre above the origin is $\Rep(\Lambda_{\boldsymbol{0}},~\updelta) /\!\!/ \mathrm{GL}.$ Since $\Lambda_{\boldsymbol{0}}$ is the (undeformed) reconstruction algebra, this is known to be the determinantal singularity corresponding to (\ref{s Veron dual graph}).
	\end{remark}
	
	\noindent
	The following is the main result of this paper.
	
	\begin{theorem}\label{thm: main}
		The diagram 
		\[
		\begin{tikzpicture}
		\node (A) at (0,0) {$\Rep(\mathbb{C}Q/I,~\updelta)~ /\!\!\!\!/_{\upvartheta_0} \mathrm{GL}$};
		\node (B) at (4,0) {$\Rep(\mathbb{C}Q/I,~\updelta)~ /\!\!\!\!/~ \mathrm{GL}$};
		\node (b) at (4,-2) {$\Delta$};
		\draw[->] (A)-- node[above]  {} (B);
		\draw[densely dotted,->] (A)-- node[below]  {$\upphi$} (b);
		\draw[->] (B)-- node[right]  {$\uppi$} (b);
		a\end{tikzpicture}
		\]
		is a simultaneous resolution of singularities in the sense that the morphism $\upphi$ is smooth, and $\uppi$ is flat.
	\end{theorem}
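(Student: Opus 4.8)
The plan is to verify separately that $\uppi$ is flat and that $\upphi$ is smooth, using the machinery assembled in \S\ref{SecSimRes}. For flatness of $\uppi$, the key observation is that $\mathcal{\scrR}^G$ is a finitely generated $\mathbb{C}$-algebra which we have presented explicitly in \S\ref{ReconAlg} via the generators $\mathsf{w}_1,\mathsf{w}_2,\mathsf{w}_3,\mathsf{v}_{i,j}$, while $\Delta$ is an affine space (a codimension-two linear subspace of $\mathbb{C}^{\oplus p_1+p_2+p_3+1}$, hence smooth and in particular a regular ring). The map $\uppi$ is, by the explicit formulas, a linear projection on the level of the generators $\mathsf{v}_{i,j}$. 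Since the target is regular and $\mathcal{\scrR}^G$ is (as a follow-up from Corollary \ref{ModuliDeformedReconSmooth} and the fibre description in Remark \ref{fibreabovegamma}) a variety whose fibres over $\Delta$ all have the same dimension, I would invoke the standard miracle-flatness criterion: a morphism from a Cohen--Macaulay scheme to a regular scheme is flat provided all fibres have the expected dimension $\dim\mathcal{\scrR}^G - \dim\Delta$. By Remark \ref{fibreabovegamma} every fibre of $\uppi$ is $\Rep(\Lambda_{\boldsymbol{\upgamma}},\updelta)/\!\!/\mathrm{GL}$, and a mild variant of the computation proving Corollary \ref{ModuliDeformedReconSmooth} (working with the GIT quotient by the full group rather than the $\upvartheta_0$-stable locus, i.e.\ with $\Rep(\Lambda_{\boldsymbol{\upgamma}},\updelta)/\!\!/\mathrm{GL}$ in place of $\scrM_{\upvartheta_0}(\Lambda_{\boldsymbol{\upgamma}},\updelta)$) shows each such fibre is two-dimensional; combined with $\dim\mathcal{\scrR}^G = \dim\Delta + 2$ this yields flatness. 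One must also check $\mathcal{\scrR}^G$ is Cohen--Macaulay — this should follow because it is a ring of invariants of a reductive group acting on a complete intersection, or alternatively because it is the affinization of the smooth surface $\scrM_{\upvartheta_0}$ fibred over $\Delta$; I would cite the appropriate normality/Cohen--Macaulay result for GIT quotients here.

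For smoothness of $\upphi$, the strategy is: $\upphi$ factors as the composite $\scrM_{\upvartheta_0} \to \mathcal{\scrR}^G \xrightarrow{\uppi} \Delta$, where the first arrow is the canonical projective morphism from the GIT quotient at the stability $\upvartheta_0$ to the affine GIT quotient. Since smoothness can be checked on fibres (a flat morphism with smooth fibres is smooth, and conversely a morphism of finite type between Noetherian schemes is smooth iff it is flat with geometrically regular fibres), it suffices to show that $\upphi$ is flat with smooth fibres. The fibre of $\upphi$ over $\boldsymbol{\upgamma}\in\Delta$ is precisely $\Rep(\Lambda_{\boldsymbol{\upgamma}},\updelta)/\!\!/_{\upvartheta_0}\mathrm{GL} = \scrM_{\upvartheta_0}(\Lambda_{\boldsymbol{\upgamma}},\updelta)$, which is two-dimensional and smooth by Corollary \ref{ModuliDeformedReconSmooth}. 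So the fibres of $\upphi$ are smooth surfaces; flatness of $\upphi$ then follows again by miracle flatness, since the total space $\scrM_{\upvartheta_0}$ is smooth (Corollary \ref{ModuliSmooth}, hence Cohen--Macaulay), $\Delta$ is regular, and all fibres have the expected constant dimension $2 = \dim\scrM_{\upvartheta_0} - \dim\Delta$. Hence $\upphi$ is flat with smooth fibres, therefore smooth.

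It remains to observe that the diagram commutes and that the horizontal map $\scrM_{\upvartheta_0}\to\mathcal{\scrR}^G$ restricts to an isomorphism on each fibre over $\Delta$ away from the singular locus downstairs — more precisely, that it is the minimal resolution of $\Rep(\Lambda_{\boldsymbol{\upgamma}},\updelta)/\!\!/\mathrm{GL}$ fibrewise, so the assertion deserves the name "simultaneous resolution." Commutativity is immediate from the constructions of $\uppi$ and $\upphi$ and the functoriality of GIT quotients in the stability parameter. That each fibre map $\scrM_{\upvartheta_0}(\Lambda_{\boldsymbol{\upgamma}},\updelta)\to \Rep(\Lambda_{\boldsymbol{\upgamma}},\updelta)/\!\!/\mathrm{GL}$ is projective and birational follows from general GIT, and that it is the minimal resolution (no $(-1)$-curves) can be read off the reconstruction-algebra dual graph in \eqref{s Veron dual graph}, all of whose vertices have self-intersection $\le -2$, together with the fact that over the generic point of $\Delta$ the base is already smooth.

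The main obstacle I anticipate is establishing the Cohen--Macaulay (or at least the equidimensionality-of-fibres) property needed to apply miracle flatness for $\uppi$ — in particular, ruling out that $\Rep(\Lambda_{\boldsymbol{\upgamma}},\updelta)/\!\!/\mathrm{GL}$ jumps dimension for special (non-generic) $\boldsymbol{\upgamma}\in\Delta$. The computation in Proposition \ref{modulicoverDeformed} and Corollary \ref{ModuliDeformedReconSmooth} is carried out for the $\upvartheta_0$-semistable locus; transporting the dimension count to the full affine quotient, where semistability is not imposed and there may be extra (non-stable, decomposable) closed orbits, is the delicate point, and is exactly the phenomenon flagged in Remark \ref{gammainDelta} as being harder than in the cyclic case. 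Everything else is a routine assembly of miracle flatness, the fibrewise smoothness already proved, and the standard comparison between variation-of-GIT and simultaneous resolution.
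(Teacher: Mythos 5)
Your proposal follows essentially the same route as the paper: both $\upphi$ and $\uppi$ are handled by miracle flatness (\cite[Corollary to 23.1]{Matsumura}) using the regularity of $\Delta$, the smoothness of the source from Corollary \ref{ModuliSmooth}, and the two-dimensionality and smoothness of the fibres $\scrM_{\upvartheta_0}(\Lambda_{\boldsymbol{\upgamma}},\updelta)$ from Corollary \ref{ModuliDeformedReconSmooth}, with smoothness of $\upphi$ deduced from flatness together with regularity of the closed fibres. The only divergence is that you explicitly flag the Cohen--Macaulayness of $\scrR^G$ needed for miracle flatness of $\uppi$ (where Hochster--Roberts does not apply verbatim, since $\scrR$ is a hypersurface rather than a polynomial ring), a point the paper passes over by noting only that the fibres of $\uppi$ are two-dimensional; your additional remarks on fibrewise minimality of the resolution go beyond what the theorem asserts and are not needed.
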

	
	\begin{proof}
		Write $\upphi$ for the composition
		\[Y= \Rep(\mathbb{C}Q/I,~\updelta)~ /\!\!\!\!/_{\upvartheta_0} \mathrm{GL} \rightarrow \Rep(\mathbb{C}Q/I,~\updelta)~ /\!\!\!\!/~ \mathrm{GL} \rightarrow \Delta.\]
		We first claim that $\upphi$ is flat. Since $(1)~\Delta$ is regular, $(2)~Y$ is regular (so Cohen-Macaulay) by Corollary \ref{ModuliSmooth}, $(3)~\mathbb{C}$ is algebraically closed so $\upphi$ takes closed points of $Y$ to closed points of $\Delta,$ and $(4)$ for every closed point $\boldsymbol{\upgamma} \in \Delta,$ for the same reason as in Remark \ref{fibreabovegamma} the fibre $\upphi^{-1}(\boldsymbol{\upgamma})$ is $\Rep(\Lambda_{\boldsymbol{\upgamma}},~\updelta)~ /\!\!\!\!/_{\upvartheta_0} \mathrm{GL}$  which is always two-dimensional by Corollary \ref{ModuliDeformedReconSmooth}, it follows  from \cite[Corollary to 23.1]{Matsumura} that $\upphi$ is flat.
		
		Now as in {\cite[3.35]{Liu}} to show that $\upphi$ is smooth, we just require smoothness (equivalently regularity, as we are working over $\mathbb{C}$) at closed points of fibres above closed points $\boldsymbol{\upgamma} \in \Delta.$ But as above $\upphi^{-1}(\boldsymbol{\upgamma})$ is $\Rep(\Lambda_{\boldsymbol{\upgamma}},~\updelta)~ /\!\!\!\!/_{\upvartheta_0} \mathrm{GL},$ which is regular at all closed points by Corollary \ref{ModuliDeformedReconSmooth}. Thus $\upphi$ is a smooth morphism, as required. 
		
		Finally, the above can be adapted to show that $\uppi$ is flat. We have that $\uppi^{-1}(\boldsymbol{\upgamma})=\Rep(\Lambda_{\boldsymbol{\upgamma}},~\updelta) /\!\!/ \mathrm{GL},$ which is always two-dimensional as a consequence of the resolution of its singularities computed in Corollary \ref{ModuliDeformedReconSmooth}. Thus we can still appeal to \cite[Corollary to 23.1]{Matsumura}.
	\end{proof}

	\section{The Representation Variety in Determinantal Form}\label{repvar}
	This section gives a conjectural explicit presentation of the high-dimensional invariant representation variety $\mathcal{\scrR}^G = \Rep(\mathbb{C}Q/I,~\updelta)/\!\!/ \mathrm{GL}$ in \S\ref{SecSimRes}. Although not strictly needed for simultaneous resolution in \S \ref{SimResSection}, it does link to other work in the literature.
	
	\subsection{Determinantal form}\label{QDetform}
	Consider a $2 \times n$ matrix
	
	\[
	\begin{pmatrix}
	a_1&  &a_2&  &\hdots& & a_n \\
	b_1& &b_2& &\hdots& & b_n \\ 
	\end{pmatrix}\]
	
	Following Stevens \cite[\S12]{DeformationsofSing}, consider the $2 \times 2$ \textit{minors} of this $2 \times n$ \textit{matrix}, which for all, $i < j$ are defined to be
	\[ a_i \cdot b_j - b_i \cdot a_j. \]

	Consider the natural homomorphism
	\[
	\mathbb{C}[\mathsf{w} , \mathsf{v}] \xrightarrow{\varphi} \scrR^G = \Rep(\mathbb{C}Q/I,~\updelta)/\!\!/ \mathrm{GL}, 
	\]
	defined by taking
	
	\begin{align*}
	\mathsf{w}_1 &\mapsto D_1U_2 + I\\
	\mathsf{w}_2 &\mapsto ~~D_2U_1 + I\\
	\mathsf{w}_3 &\mapsto -D_2U_3 + I\\				
	\mathsf{v}_{i,j} &\mapsto ~~d_{ij}u_{ij} + I.
	\end{align*}
for $i=1, 2, 3 ~\text{and}~ 1 <j \leq p_i.$	
	
	\begin{prop}
		The homomorphism $\mathbb{C}[\mathsf{w} , \mathsf{v}] \xrightarrow{\varphi} \scrR^G$ is surjective, and all the $2 \times 2$ minors of the matrix 
		\begin{equation}\label{R matrix}
		\left(
		\begin{array}{ccccc}
		{\mathsf w}_2&{\mathsf w}_3 &{v_{2,1}\hdots v_{2,{p_2}}}\\
		{v_{1,1}\hdots v_{1,{p_1}}}&{\mathsf w}_3+{v_{3,1}\hdots v_{3,{p_3}}}&{\mathsf w}_1
		\end{array}
		\right)
		\end{equation}
		are all sent to zero.
	\end{prop}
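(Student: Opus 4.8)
The plan is to establish the two assertions separately. Surjectivity is immediate: by Proposition \ref{invariantringgens} together with Lemma \ref{lemmainvargens} — equivalently, by the explicit list of generators of $\scrR^G$ recorded at the start of \S\ref{SimRes} — the ring $\scrR^G$ is generated as a $\mathbb{C}$-algebra by the cycles $D_1U_2+I$, $D_2U_1+I$, $-D_2U_3+I$ together with the two-cycles $d_{ij}u_{ij}+I$. By the very definition of $\varphi$ these are precisely the images of the variables $\mathsf{w}_1,\mathsf{w}_2,\mathsf{w}_3$ and $\mathsf{v}_{i,j}$, so $\varphi$ is surjective and nothing further is needed here.

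For the minors, the key observation is that applying $\varphi$ makes the matrix \eqref{R matrix} rank one. Apply $\varphi$ entrywise: each entry other than the $(2,2)$ one maps to $\pm D_aU_b+I$ (in particular the products $v_{i,1}\hdots v_{i,p_i}$ map to $D_iU_i+I$, $\mathsf{w}_1$ to $D_1U_2+I$, $\mathsf{w}_2$ to $D_2U_1+I$), while the $(2,2)$ entry $\mathsf{w}_3+v_{3,1}\hdots v_{3,p_3}$ maps to $-D_2U_3+D_3U_3+I=(D_3-D_2)U_3+I$, which by the defining relation $D_1-D_2+D_3=0$ of $\scrR$ equals $-D_1U_3+I$. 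Hence $\varphi$ carries \eqref{R matrix} to
\[
\begin{pmatrix} D_2U_1 & -D_2U_3 & D_2U_2\\ D_1U_1 & -D_1U_3 & D_1U_2\end{pmatrix}+I=\left(\begin{pmatrix}D_2\\ D_1\end{pmatrix}\begin{pmatrix}U_1 & -U_3 & U_2\end{pmatrix}\right)+I,
\]
a product of a $2\times 1$ matrix and a $1\times 3$ matrix with entries in $\scrR$. Every $2\times 2$ minor of such a product is zero, and since $\varphi$ is a ring homomorphism the image under $\varphi$ of a $2\times 2$ minor of \eqref{R matrix} coincides with the corresponding $2\times 2$ minor of the displayed matrix; therefore all of them are sent to zero.

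There is no serious obstacle. The only step that is not purely formal is the identification of the image of the $(2,2)$ entry: this is the unique place where the relation $I=\langle D_1-D_2+D_3\rangle$ is used, and the sign in $\varphi(\mathsf{w}_3)=-D_2U_3+I$ is exactly what makes the two rows of the resulting matrix proportional. If one prefers to bypass the factorization, one can expand the three $2\times 2$ minors of \eqref{R matrix} directly: after substituting $D_2=D_1+D_3$, and using that permuting composable sub-paths within a cycle does not change its image in the commutative ring $\mathbb{C}[\Rep(\mathbb{C}Q,\updelta)]$, each minor becomes a difference of two monomials built from the same multiset of arrow variables, hence vanishes in $\scrR$.
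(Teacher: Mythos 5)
Your proof is correct, and it rests on exactly the same two ingredients as the paper's: the commutativity of the arrow variables in $\mathbb{C}[\Rep(\mathbb{C}Q,\updelta)]$ (so that a product of the $2$-cycles $d_{ij}u_{ij}$ along an arm equals $D_iU_i$), and the canonical relation $D_1=D_2-D_3$ applied precisely at the $(2,2)$ entry to identify $\varphi(\mathsf{w}_3+\mathsf{v}_{3,1}\hdots\mathsf{v}_{3,p_3})$ with $-D_1U_3+I$. The only real difference is organizational: the paper verifies the three $2\times 2$ minors one at a time by direct manipulation, whereas you observe once and for all that the image matrix factors as
\[
\begin{pmatrix}D_2\\ D_1\end{pmatrix}\begin{pmatrix}U_1 & -U_3 & U_2\end{pmatrix},
\]
so that all minors vanish simultaneously because the image matrix has rank one. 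This packaging is cleaner and makes the sign conventions in the definition of $\varphi$ (in particular $\varphi(\mathsf{w}_3)=-D_2U_3+I$) transparent, and it would scale without extra work to the $n$-arm version of the matrix; what it does not change is the mathematical content, which is identical to the paper's. Your surjectivity argument likewise matches the paper's appeal to Lemma \ref{lemmainvargens}.
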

	
	\begin{proof}
		Surjectivity follows from Lemma \ref{lemmainvargens}. The $2 \times 2$ minor of the outer columns gives
		\[
		\varphi (\mathsf{w}_2 \mathsf{w}_1 - (\mathsf{v}_{1,1}\hdots \mathsf{v}_{1,{p_1}}) (\mathsf{v}_{2,1}\hdots \mathsf{v}_{2,{p_2}}))=
		D_2U_1D_1U_2 - D_1U_1D_2U_2  =0\]
		\noindent
		Using $D_1U_i =  D_2 U_i - D_3 U_i$ from the relation $x_1^{p_1} =  x_2^{p_2} - x_3^{p_3},$ we further have
		\begin{align*}
		\varphi ( \mathsf{w}_1\mathsf{w}_3) &= (D_1U_2)(-D_2U_3)\\
		&= -(D_1U_3)(D_2U_2)\\
		&= (-D_2U_3 +D_3U_3)D_2U_2\\
		&= \varphi ((\mathsf{w}_3 + \mathsf{v}_{3,1}\hdots \mathsf{v}_{3,p_3})(\mathsf{v}_{2,1}\hdots \mathsf{v}_{2,p_2})).
		\end{align*}
		Thus the $2 \times 2$ minor consisting of the $i^{th}$ column and the last one is sent to zero.\\
		
		Finally,
		\begin{align*}
		\varphi (\mathsf{w}_2(\mathsf{w}_3 + \mathsf{v}_{3,1}\hdots \mathsf{v}_{3,p_3})) &= (D_2U_1)( \mathsf{w}_3 + \mathsf{v}_{3,1}\hdots \mathsf{v}_{3,p_3})\\
		&= (D_2U_1)(- D_2U_3 +D_3U_3)\\
		&= (D_2U_1) \cdot -(D_1U_3)\\
		&= (D_1U_1)(-D_2U_3)\\
		&= \varphi ((\mathsf{v}_{1,1}\hdots \mathsf{v}_{1,p_1})\mathsf{w}_3).\\
		\end{align*}

		This shows that the $2 \times 2$ minors of the matrix belong to the kernel of $\varphi,$ as required. 
	\end{proof}
	
	\begin{conj}
		The kernel of $\varphi$ is given by the ideal generated by the $2\times 2$ minors of the matrix \eqref{R matrix}.
	\end{conj}
	Calculations in magma \cite{magma} confirm this in small cases, but the Gr\"obner approach seems hard in general.

\end{document}